\newcommand{\fr}{\mathfrak}
 \newtheorem{lemma} {Lemma} [section]
\newtheorem{theorem}[lemma]{Theorem} 
\newtheorem{remark}[lemma] {Remark} 
\newtheorem{prop} [lemma]{Proposition}  
\newtheorem{corol}[lemma] {Corollary} 
\newtheorem{problem}[lemma] {Problem}
\begin{document}

\title{Geodesic orbit metrics in compact homogeneous manifolds with equivalent isotropy submodules}

\author{Nikolaos Panagiotis Souris}
\address{University of Patras, Department of Mathematics, GR-26500 Patras, Greece}
\email{nsouris@upatras.gr}

\begin{abstract}
A geodesic orbit manifold (GO manifold) is a Riemannian manifold $(M,g)$ with the property that any geodesic in $M$ is an orbit of a one-parameter subgroup of a group $G$ of isometries of $(M,g)$.  The metric $g$ is then called a $G$-GO metric in $M$.  For an arbitrary compact homogeneous manifold $M=G/H$, we simplify the general problem of determining the $G$-GO metrics in $M$.   In particular, if the isotropy representation of $H$ induces equivalent irreducible submodules in the tangent space of $M$, we obtain algebraic conditions, under which, any $G$-GO metric in $M$ admits a reduced form.  As an application we determine the $U(n)$-GO metrics in the complex Stiefel manifolds $V_k\mathbb C^n$.\end{abstract}

\title[GO metrics in compact manifolds with equivalent isotropy submodules]{Geodesic orbit metrics in compact homogeneous manifolds with equivalent isotropy submodules}

\maketitle
\medskip   
\section{Introduction}\label{intro}

The class of GO manifolds is a proper subclass of the class of \emph{D'Atri manifolds}, i.e, the local geodesic symmetries are volume preserving (\cite{Ko-Va-1}), and its most notable proper subclasses include those of \emph{symmetric spaces}, \emph{naturally reductive spaces} (\cite{Kob-No}), \emph{weakly symmetric manifolds} (\cite{Ber-Ko-Va},\cite{Wo}), as well as the classes of \emph{Clifford-Wolf homogeneous manifolds} (\cite{Be-Ni-2}) and \emph{$\delta$-homogeneous manifolds} (\cite{Be-Ni-1}).  The classification of GO manifolds was initially considered in 1991 by Kowalski and Vanhecke, who classified the GO manifolds up to dimension 6 (\cite{Ko-Va-2}).  In \cite{Go} the classification of GO manifolds is partially reduced to the classification of GO nilmanifolds, compact GO manifolds and GO manifolds with a non-compact semisimple group of isometries.  Several classes of GO manifolds have also been characterized (\cite{Al-Ar}, \cite{Al-Ni}, \cite{Ca-Ma}, \cite{Ni-1}, \cite{Ta} to name a few), however their complete classification remains an open problem.\\
Our study restricts to compact GO manifolds, and the purpose of our work is to simplify the general problem of determining the $G$-GO metrics $g$ in an arbitrary compact homogeneous manifold $M$ diffeomorphic to $G/H$.  The main obstacle against this goal is the existence of ''non-diagonal'' elements of $g$ resulting from \emph{isotypical summands} (sums of equivalent submodules) of the \emph{isotropy representation} of $H$ (section \ref{inme}).  For that reason we obtain algebraic conditions in the tangent space of $M$ which allow us to simplify the form of the candidate $G$-GO metrics $g$ in $M$ (section \ref{s3}).  In particular, assume that $p$ is the origin in $M$, $\chi:H\rightarrow \operatorname{Aut}(T_pM)$ is the isotropy representation and $A:T_pM\rightarrow T_pM$ is the corresponding \emph{metric endomorphism} of a $G$-GO metric $g$, expressed in terms of a chosen basis.  The aforementioned conditions allow us to exploit the Lie bracket relations between the irreducible submodules of $\chi$, primarly in order to eliminate the non-diagonal elements of the restriction of $A$ on certain isotypical summands (Propositions \ref{en}, \ref{class1} and \ref{class}), and secondly, to reduce the number of distinct eigenvalues of $A$ (Proposition \ref{triple}).  Consequently, the problem of obtaining the $G$-GO metrics in $M$, is largely reduced to the process of determining the classes of equivalent, irreducible isotropy submodules and the Lie algebraic relations between those submodules.  Our approach and our results regarding the investigation of GO metrics in compact homogeneous manifolds are presented in sections \ref{mot} and \ref{s3} respectively.\\
Finally, in section \ref{prooof} we implement our results in order to obtain the $U(n)$-GO metrics in the complex Stiefel manifolds $V_k\mathbb C^n$.  In particular we prove the following.

\begin{theorem}\label{mainuu}Let $V_k\mathbb C^n$ be a complex Stiefel manifold.  Then $V_k\mathbb C^n$ admits exactly one (up to scalar) family of $U(n)$-GO metrics $A_{t}$, $t>0$.  The metrics $A_t$ are smooth deformations of the normal metric $A_1$, along the center of the group $N_G(H)/H$, where $G=U(n)$, $H=U(n-k)$ and $N_G(H)$ is the normalizer of $H$ in $G$.
\end{theorem}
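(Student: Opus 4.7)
My plan is to specialize the general reductions of Section \ref{s3} to the pair $G=U(n)$, $H=U(n-k)$, and then solve the residual GO equation by an explicit matrix computation. I would begin by writing elements of $\mathfrak{u}(n)$ in $(n-k,k)$-block form, which supplies the $\Ad(H)$-invariant decomposition $\mathfrak{u}(n)=\mathfrak{h}\oplus\mathfrak{m}_1\oplus\mathfrak{m}_2$, where $\mathfrak{m}_1$ is the off-diagonal block (identified with $M_{(n-k)\times k}(\C)$) and $\mathfrak{m}_2=\mathfrak{u}(k)$. The isotropy representation of $H$ decomposes as $k$ copies of the real-irreducible standard module on $\mathfrak{m}_1$, and is trivial on $\mathfrak{m}_2$; since these two pieces belong to inequivalent isotypical types, any $G$-invariant metric endomorphism $A$ preserves the splitting $\mathfrak{m}_1\oplus\mathfrak{m}_2$.

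Next, I would apply Propositions \ref{en}, \ref{class1} and \ref{class} to eliminate the non-diagonal entries of $A$ on the $k^2$-fold trivial isotypical summand $\mathfrak{m}_2=\mathfrak{u}(k)$, and Proposition \ref{triple} to the simple ideal $\mathfrak{su}(k)$, in order to reduce the unknowns in $A$ to three positive scalars: $\lambda$ on $\mathfrak{m}_1$, $\mu$ on $\mathfrak{su}(k)$, and $\nu$ on $\R\cdot iI_k$. (The same reduction can be extracted directly from the GO condition: the $\mathfrak{m}_2$-projection of $[X_1,AX_1]$ for $X_1\in\mathfrak{m}_1$ gives $[P,B^{*}B]=0$ for all $B$, so $A|_{\mathfrak{m}_1}$ is scalar; and $[X_2,AX_2]=0$ for $X_2\in\mathfrak{m}_2$ together with the simplicity of $\mathfrak{su}(k)$ forces $A$ to be scalar on $\mathfrak{su}(k)$ and on $\R\cdot iI_k$ separately.)

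The crucial identity is then $\lambda=\mu$, and deriving it is the main technical obstacle. For a generic $X=X_1+X_2^0+s\cdot iI_k\in\mathfrak{m}$ with $X_1\leftrightarrow B\in M_{(n-k)\times k}(\C)$, $X_2^0\leftrightarrow C\in\mathfrak{su}(k)$, $s\in\R$, and a candidate $Z\leftrightarrow Z_1\in\mathfrak{u}(n-k)\subset\mathfrak{h}$, the $\mathfrak{m}_1$-projection of the GO condition $[X+Z,AX]\in\mathfrak{h}$ becomes
\begin{equation*}
\lambda\, Z_1 B=(\lambda-\mu)\, BC+(\lambda-\nu)\, s\, iB.
\end{equation*}
I would then set $s=0$, choose $B=\diag(d_1,\dots,d_m,0,\dots,0)$ with $m=\min(n-k,k)$ and pairwise distinct $d_i>0$, and let $C$ vary over $\mathfrak{su}(k)$. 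Combining the anti-Hermitian constraints $Z_1^{*}=-Z_1$ and $C^{*}=-C$ reduces the residual equation to $(\lambda-\mu)(d_i/d_j-d_j/d_i)c_{ij}=0$; for $m\geq 2$ this forces $\lambda=\mu$, and the low-rank cases ($k=1$, in which $\mathfrak{su}(k)=0$ and there is no $\mu$, or $n-k=1$, handled by a short vector-matrix calculation with a single-commutator $C$) are dealt with separately.

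Once $\lambda=\mu$, the displayed equation reduces to $\lambda Z_1 B=(\lambda-\nu)s\, iB$, which admits the scalar solution $Z_1=\lambda^{-1}(\lambda-\nu)s\, iI_{n-k}\in\mathfrak{u}(n-k)$, independent of $B$ and $C$. Hence every positive $\nu$ yields a $U(n)$-GO metric, and after normalizing $\lambda=\mu=1$ and writing $t=\nu$ one obtains the family $A_t$, with $A_1=\Id$ the normal metric. Finally, since $N_G(H)=U(n-k)\times U(k)$, we have $N_G(H)/H\cong U(k)$ whose center is the circle generated by $iI_k$; thus $A_t$ is precisely the rescaling of $A_1$ along this central direction in $\mathfrak{m}_2$, completing the proof.
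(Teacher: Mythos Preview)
Your proof is correct and follows the paper's overall architecture (isotropy decomposition, reduction to three scalars, identification of two of them, verification). A few points of comparison. Your appeal to Propositions~\ref{class1} and~\ref{class} for the trivial summand $\mathfrak{m}_2=\mathfrak{u}(k)$ is misplaced, since those are designed for non-trivial isotypical summands; Proposition~\ref{en} alone already yields $A|_{\mathfrak{m}_2}=\mu\,\Id|_{\mathfrak{su}(k)}+\nu\,\Id|_{\R iI_k}$, and your parenthetical direct argument via $[X_2,AX_2]=0$ is an equally valid route. For the off-diagonal block $\mathfrak{m}_1$ the paper instead applies Proposition~\ref{class1} (with $X_l^v=\bar e_{ll}\in\mathfrak{m}_2$) and then Proposition~\ref{triple}; your $[P,B^{*}B]=0$ argument is a clean shortcut here. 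The main divergence is in deriving $\lambda=\mu$: you solve $\lambda Z_1B=(\lambda-\mu)BC$ with a diagonal $B$ and then split into the cases $\min(n-k,k)\geq 2$, $k=1$, $n-k=1$; the paper simply takes the first column-submodule $\mathfrak n_1\subset\mathfrak{m}_1$ (which is $\Ad(H)$-invariant), observes $[e_{12},e_{1,k+1}]=-e_{2,k+1}\notin\mathfrak{su}(k)\oplus\mathfrak n_1$, and invokes part~\textbf{1.} of Proposition~\ref{triple} to conclude $\lambda=\mu$ in one stroke with no case analysis. The paper's closing Remark offers a still slicker alternative: by Corollary~\ref{poritsis} the endomorphism $A|_{\mathfrak{m}_1}$ is $\Ad(U(k)\times U(n-k))$-equivariant, and since the Grassmannian $U(n)/U(k)\times U(n-k)$ is isotropy irreducible, $A|_{\mathfrak{m}_1}$ is scalar automatically. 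Finally, your $Z_1=\lambda^{-1}(\lambda-\nu)s\,iI_{n-k}$ agrees with the paper's $a_t$; you should also record (it is immediate from the block computation) that the $\mathfrak h$- and $\mathfrak{m}_2$-components of $[X+Z,AX]$ vanish as well, so that the full GO equation, not just its $\mathfrak{m}_1$-projection, is satisfied.
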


\noindent
{\bf Acknowledgements.}
The work was supported by Grant $\#E.037$ from the Research Committee of the University of Patras (Programme K. Karatheodori).
The author wishes to acknowledge Professors A. Arvanitoyeorgos and Y. G. Nikonorov for their useful comments on the manuscript. 

\section{Investigation of GO metrics in compact homogeneous manifolds}\label{mot}

\subsection{Problem statement}\label{ps}

A \emph{geodesic orbit manifold} (or a \emph{GO manifold}) is a Riemannian manifold $(M,g)$ such that any geodesic in $M$ is an orbit of an one-parameter subgroup of a Lie group $G$ of isometries of $(M,g)$.  To emphasize the group $G$, the manifold $(M,g)$ is also called a $G$-GO manifold.  In turn, the metric $g$ is called a \emph{$G$-GO metric} in $M$.  The definition of a GO manifold can also be extended to pseudo-Riemannian manifolds (\cite{Du}).  If $(M,g)$ is a connected $G$-GO manifold then $M$ is \emph{homogeneous} and therefore diffeomorphic to the space $G/H$ where $H$ is the isotropy subgroup of $p\in M$.  The corresponding $G$-GO metric $g$ is a \emph{$G$-invariant metric} in the space $G/H$, i.e. it is invariant by the left translations $\tau_q:G/H\rightarrow G/H$, $q\in G$.  The Riemannian space $(G/H,g)$ is then called a \emph{GO space}.\\
 Thus, in order to determine all GO metrics in a given homogeneous manifold $M$, one needs first to determine the $G$-GO metrics in $M$ for any Lie group $G$ acting smoothly and transitively on $M$.  For any such choice $G$, the above process then restricts to obtaining the $G$-invariant metrics $g$ such that the corresponding Riemannian space $(G/H,g)$ is a GO-space. \\ 
Our study concerns the class of compact homogeneous manifolds.  We consider the following general problem.

\begin{problem}\label{pro1}Let $M$ be a compact homogeneous manifold and let $G$ be a Lie group acting smoothly and transitively on $M$ so that $M$ is diffeomorphic to $G/H$.  Determine the $G$-GO metrics $g$ in $M$.\end{problem}

Let $p\in M$ be the origin and let $\fr{g},\fr{h}$ be the Lie algebras of $G,H$ respectively.  Since $M$ is compact, then $G$ is also compact, hence there exists an \emph{$\operatorname{Ad}$-invariant inner product} $B$ in $\fr{g}$.  We consider the $B$-orthogonal \emph{reductive decomposition} 

\begin{equation}\label{decc}\fr{g}=\fr{h}\oplus \fr{m},\end{equation} 

 where $\fr{m}$ is isomorphic to the tangent space $T_p(G/H)$, and $\operatorname{Ad}(H)\fr{m}\subset \fr{m}$.  Then any $G$-invariant metric $g$ in $G/H$ is in 1-1 correspondence with an \emph{$\operatorname{Ad}(H)$-equivariant, symmetric and positive definite} endomorphism $A:\fr{m}\rightarrow \fr{m}$, called the \emph{metric endomorphism} of $g$, such that

\begin{equation}\label{atp}g_p(X,Y)=B(AX,Y), \quad X,Y \in \fr{m}.\end{equation}

In order to find those metric endomorphisms $A$ corresponding to a $G$-GO metric, we use the following condition which is based on a proposition in \cite{Al-Ar}.
	
\begin{prop}\label{G}Let $M=G/H$ be a compact homogeneous manifold, with the decomposition $\fr{g}=\fr{h}\oplus \fr{m}$.  Then $(M,g)$ is a $G$-GO manifold, (i.e. $(G/H,g)$ is a GO-space) if and only if for any vector $X\in \fr{m}$, there exists a vector $a=a_X\in \fr{h}$ such that 

\begin{equation*}\label{cond}[a+X,AX]=0,\end{equation*}

where $A:\fr{m}\rightarrow \fr{m}$ is the metric endomorphism of $g$.
\end{prop}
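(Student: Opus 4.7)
The plan is to reduce the biconditional to the standard criterion characterizing when an orbit of a one-parameter subgroup of $G$ is a geodesic for the metric associated with $A$. First, I would observe that any orbit $\gamma(t)=\exp(tZ)\cdot p$, $Z\in\fr{g}$, starts at $p$ with initial velocity equal to the $\fr{m}$-component of $Z$; decomposing $Z=a+Y$ along \eqref{decc} and requiring the initial velocity to equal a prescribed $X\in\fr{m}$ forces $Y=X$. Thus $(M,g)$ is a $G$-GO manifold if and only if, for every $X\in\fr{m}$, there exists $a\in\fr{h}$ for which the curve $\exp(t(a+X))\cdot p$ is a $g$-geodesic.

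Next I would apply the geodesic lemma underlying \cite{Al-Ar}: combining the Koszul-type formula for the Levi--Civita connection of a $G$-invariant metric with the $\operatorname{Ad}$-invariance of $B$ and the $\operatorname{Ad}(H)$-equivariance of $A$, one shows that $\exp(t(a+X))\cdot p$ is a geodesic if and only if
\begin{equation*}
B([a+X,Y],AX)=0\quad\text{for every } Y\in\fr{m}.
\end{equation*}
A single application of the $\operatorname{ad}$-invariance of $B$ rewrites this as $B(Y,[AX,a+X])=0$ for all $Y\in\fr{m}$, and by the non-degeneracy of $B|_{\fr{m}}$ this is equivalent to $[a+X,AX]_{\fr{m}}=0$.

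Finally I would strengthen this to the full Lie bracket identity by showing that $[a+X,AX]_{\fr{h}}=0$ automatically for every $a\in\fr{h}$ and $X\in\fr{m}$. Since $[\fr{h},\fr{m}]\subset\fr{m}$, the term $[a,AX]$ lies in $\fr{m}$, so it suffices to verify $[X,AX]_{\fr{h}}=0$. For an arbitrary $h\in\fr{h}$, the $\operatorname{Ad}(H)$-invariance of the inner product $\langle\cdot,\cdot\rangle_p=B(A\,\cdot\,,\,\cdot\,)$ forces $B(AX,[h,X])=0$; combined with the $\operatorname{ad}$-invariance of $B$ and the identity $[h,AX]=A[h,X]$ this yields
\begin{equation*}
B([X,AX],h)=-B(X,[h,AX])=-B(X,A[h,X])=-B(AX,[h,X])=0.
\end{equation*}
Since $B$ is non-degenerate on $\fr{h}$, we conclude $[X,AX]_{\fr{h}}=0$. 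Combining the $\fr{m}$- and $\fr{h}$-components yields the stated biconditional. The main technical point I anticipate is packaging the standard Koszul geodesic criterion into the compact bracket form displayed above; the concluding $\fr{h}$-component identity is then a short $\operatorname{ad}$-invariance computation.
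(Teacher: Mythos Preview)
Your proposal is correct and follows essentially the same route as the paper. The paper invokes \cite{Al-Ar} directly to obtain the equivalent criterion $[a+X,AX]\in\fr{h}$, whereas you re-derive that criterion from the geodesic lemma; thereafter, the decisive step---showing $[X,AX]\perp\fr{h}$ via the $\operatorname{ad}$-invariance of $B$ together with the $\operatorname{Ad}(H)$-equivariance and $B$-symmetry of $A$---is the same computation in both arguments (your chain $B([X,AX],h)=-B(AX,[h,X])$ and the paper's chain terminating in $B([X,AX],\widetilde a)=-B([X,AX],\widetilde a)$ are two readings of the same identity).
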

	
\begin{proof}	
	
According to Proposition 1. in \cite{Al-Ar}, the metric $g$ is a $G$-GO metric if and only if for any vector $X\in \fr{m}$, there exists a vector $a=a_X\in \fr{h}$ such that $[a+X,AX]\in \fr{h}$ .  Thus it suffices to prove that $[a+X,AX]\in \fr{m}$.\\
From the $\operatorname{Ad}(H)$-invariance of $\fr{m}$ and the definition of $A$ we obtain that $[a,AX]\in \fr{m}$.  Therefore, it remains to prove that $[X,AX]\in \fr{m}$.  Indeed, choose any vector $\widetilde{a}\in \fr{h}$.  By using the $\operatorname{ad}(\fr{h})$-equivariance and the $B$-symmetry of $A$ as well as the $\operatorname{ad}$-skew symmetry of $B$ (resulting from the $\operatorname{Ad}$-invariance of $B$), we obtain that 

\begin{equation*}B([X,AX],\widetilde{a})=-B(X,[\widetilde{a},AX])=-B(X,A[\widetilde{a},X])=-B(AX,[\widetilde{a},X])=-B([X,AX],\widetilde{a}),\end{equation*}

\noindent which implies that $B([X,AX],\widetilde{a})=0$.  Therefore $[X,AX]\in \fr{m}$, hence Proposition \ref{G} follows.

\end{proof}

 Under the above notation, Problem \ref{pro1} reduces to the following.
\begin{problem}\label{pro2}Determine the metric endomorphisms $A:\fr{m}\rightarrow \fr{m}$ with the property that for any vector $X\in \fr{m}$, there exists a vector $a=a_X\in \fr{h}$ such that 
\begin{equation}\label{obv}[a+X,AX]=0.\end{equation}
\end{problem}

Let $N_G(H_o)$ be the normalizer in $G$ of the unit component $H_o$ of $H$, and let $\fr{n}_{\fr{g}}(\fr{h})$ be the corresponding Lie algebra.  We state an important property for a GO-metric.

\begin{prop}\label{nik} (\cite{Ni-2})  Let $(G/H,\langle \ ,\ \rangle)$ be a GO space.  Then the product $\langle \ ,\ \rangle$ is $\operatorname{Ad}(N_G(H_o))$-invariant. 
\end{prop}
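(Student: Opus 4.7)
The goal reduces, at the infinitesimal level and after identifying $\langle\ ,\ \rangle$ with the metric endomorphism $A$ via \eqref{atp}, to showing that $A$ commutes with $\operatorname{ad}(n)|_{\fr{m}}$ for every $n \in \fr{n}_{\fr{g}}(\fr{h})$. Note that $\operatorname{ad}(n)$ indeed preserves $\fr{m}$, since $\exp(tn)$ lies in $N_G(H_o)$, so $\operatorname{Ad}(\exp(tn))$ preserves $\fr{h}$ and hence its $B$-orthogonal complement $\fr{m}$. Decompose $\fr{n}_{\fr{g}}(\fr{h}) = \fr{h} \oplus \fr{n}_{\fr{m}}$, where $\fr{n}_{\fr{m}} := \fr{n}_{\fr{g}}(\fr{h}) \cap \fr{m} = \{Z \in \fr{m} : [Z,\fr{h}] = 0\}$. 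For the $\fr{h}$-piece the commutation is automatic from the $\operatorname{Ad}(H_o)$-equivariance of $A$, so it remains to verify
\begin{equation*}
A[Z,X] = [Z,AX], \qquad Z \in \fr{n}_{\fr{m}},\ X \in \fr{m}.
\end{equation*}

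Two preparatory facts leverage the GO hypothesis. First, $A$ preserves $\fr{n}_{\fr{m}}$, which coincides with the $\operatorname{Ad}(H_o)$-fixed subspace of $\fr{m}$, on which the $\operatorname{Ad}(H_o)$-equivariant operator $A$ necessarily restricts. Second, $[Z,AZ] = 0$ for each $Z \in \fr{n}_{\fr{m}}$: Proposition~\ref{G} applied to $X=Z$ supplies $a \in \fr{h}$ with $[a+Z,AZ] = 0$, and $[a,AZ] = 0$ because $AZ \in \fr{n}_{\fr{m}}$.

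The main step is to apply Proposition~\ref{G} to the one-parameter family $X + sZ \in \fr{m}$, $s \in \R$. This yields $a(s) \in \fr{h}$ with $[a(s) + X + sZ,\ AX + sAZ] = 0$. Expansion, combined with $[a(s),AZ] = 0$ and $[Z,AZ] = 0$, reduces the identity to
\begin{equation*}
[a(s),AX] + [X,AX] + s\bigl([X,AZ] + [Z,AX]\bigr) = 0.
\end{equation*}
Subtracting the GO relation $[a_0,AX] + [X,AX] = 0$ for $X$ alone and, for any fixed $s \neq 0$, setting $c := (a(s)-a_0)/s \in \fr{h}$, produces for every $X \in \fr{m}$ an element $c \in \fr{h}$ with
\begin{equation*}
[c,AX] + [X,AZ] + [Z,AX] = 0.
\end{equation*}
Rewriting $[c,AX] = A[c,X]$ via the $\operatorname{Ad}(H_o)$-equivariance of $A$ and pairing the identity against an arbitrary $Y \in \fr{m}$ under $B$, the $\operatorname{ad}$-skew-symmetry of $B$ and the $B$-symmetry of $A$ reorganize the contribution of $c$ into a term that vanishes by the same $\operatorname{Ad}(H_o)$-equivariance; what is left is $\langle[Z,X],Y\rangle + \langle X,[Z,Y]\rangle = 0$, which is the desired infinitesimal invariance.

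The principal obstacle is that $a(s)$ is not uniquely determined by the GO condition, nor does it need to depend continuously on $s$. This prevents a straightforward differentiation in $s$, but is bypassed by working purely algebraically: for any fixed nonzero $s$ the difference $(a(s)-a_0)/s$ is a bona fide element of $\fr{h}$, producing the sought $c$ without any limiting procedure. The subtler part of the argument is the final reabsorption of the $c$-dependent term when pairing with $Y$; this algebraic bookkeeping is detailed in \cite{Ni-2}.
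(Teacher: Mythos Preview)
The paper does not supply its own proof of this proposition: it is quoted verbatim from \cite{Ni-2} and used as a black box. So there is nothing to compare your argument against in the present paper.

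That said, your sketch has a genuine gap at exactly the point you flag as ``subtler''. Everything up to and including the identity
\[
[c,AX] + [X,AZ] + [Z,AX] = 0, \qquad c = \frac{a(s)-a_0}{s}\in\fr{h},
\]
is correct. The difficulty is that $c$ depends on $X$ (and on $Z$ and the chosen $s$), so when you pair with an arbitrary $Y$ and try to ``reabsorb'' the $c$-term, you cannot simply symmetrize in $X$ and $Y$: the analogous identity for $Y$ would involve a \emph{different} element $c'\in\fr{h}$. Concretely, rewriting $[c,AX]=A[c,X]$ and using $B$-symmetry of $A$ together with $\operatorname{ad}$-skew-symmetry of $B$ converts $B(A[c,X],Y)$ into $-B(X,[c,AY])$, which still carries the $X$-dependent $c$ and does not cancel against anything on the page. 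Your final sentence concedes as much by deferring the ``algebraic bookkeeping'' to \cite{Ni-2}; but that bookkeeping is the entire content of the argument, not a routine step. As written, the proposal does not establish $A[Z,X]=[Z,AX]$.

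A secondary point: you pass from $\operatorname{Ad}(N_G(H_o))$-invariance to its infinitesimal version without comment. Since $N_G(H_o)$ need not be connected, the infinitesimal statement only gives invariance under the identity component; one still needs a separate argument (or an explicit hypothesis) to reach the full normalizer.
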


As a result we obtain the following.

\begin{corol}\label{poritsis} Let $(G/H,\langle \ ,\ \rangle)$ be a GO space.  The metric endomorphism $A:\fr{m}\rightarrow \fr{m}$ is $\operatorname{Ad}(N_G(H_o))$-equivariant.
\end{corol}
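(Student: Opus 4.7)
The plan is to combine Proposition~\ref{nik} with the defining relation $g_p(X,Y)=B(AX,Y)$ and the $\operatorname{Ad}$-invariance of $B$, using the fact that $N_G(H_o)$ preserves the reductive decomposition $\fr{g}=\fr{h}\oplus\fr{m}$.

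First I would verify that $\operatorname{Ad}(n)\fr{m}\subset \fr{m}$ for every $n\in N_G(H_o)$. Indeed, since $n$ normalizes $H_o$, its $\operatorname{Ad}$-action preserves the Lie algebra $\fr{h}$, and because $\fr{m}$ is the $B$-orthogonal complement of $\fr{h}$ while $B$ is $\operatorname{Ad}$-invariant, the complement $\fr{m}$ is also $\operatorname{Ad}(N_G(H_o))$-invariant. This is what makes it meaningful to speak of $\operatorname{Ad}(N_G(H_o))$-equivariance of the endomorphism $A:\fr{m}\to\fr{m}$.

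Next I would translate the conclusion of Proposition~\ref{nik} into the infinitesimal statement $\langle \operatorname{Ad}(n)X,\operatorname{Ad}(n)Y\rangle = \langle X,Y\rangle$ for all $X,Y\in \fr{m}$ and $n\in N_G(H_o)$, and rewrite this, via (\ref{atp}), as
\begin{equation*}
B\bigl(A\operatorname{Ad}(n)X,\operatorname{Ad}(n)Y\bigr)=B(AX,Y).
\end{equation*}
Applying the $\operatorname{Ad}$-invariance of $B$ to the left-hand side yields $B\bigl(\operatorname{Ad}(n^{-1})A\operatorname{Ad}(n)X,Y\bigr)=B(AX,Y)$ for every $Y\in\fr{m}$. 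Since $B|_{\fr{m}\times\fr{m}}$ is non-degenerate and both $\operatorname{Ad}(n^{-1})A\operatorname{Ad}(n)X$ and $AX$ lie in $\fr{m}$ (by the previous paragraph), we conclude $\operatorname{Ad}(n^{-1})A\operatorname{Ad}(n)X=AX$, i.e. $A\operatorname{Ad}(n)X=\operatorname{Ad}(n)AX$, which is precisely the required equivariance.

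There is no real obstacle here: the only subtle point, and the only step worth checking carefully, is that $\operatorname{Ad}(n)$ preserves $\fr{m}$ so that the equation $A\operatorname{Ad}(n)=\operatorname{Ad}(n)A$ is well-defined on $\fr{m}$. Everything else is a one-line algebraic manipulation combining Proposition~\ref{nik}, the definition (\ref{atp}) of $A$, and the $\operatorname{Ad}$-invariance of $B$.
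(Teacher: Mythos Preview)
Your proof is correct and is precisely the argument the paper has in mind: the corollary is stated without proof, immediately after Proposition~\ref{nik}, as a direct consequence of that proposition together with the definition (\ref{atp}) of $A$ and the $\operatorname{Ad}$-invariance of $B$. You have simply spelled out those details, including the one nontrivial point (that $\operatorname{Ad}(N_G(H_o))$ preserves $\fr{m}$), so there is nothing to add.
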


\subsection{The general form of a metric endomorphism}\label{inme}

To obtain the general form of a metric endomorphism $A:\fr{m}\rightarrow \fr{m}$, one has to compute the \emph{isotropy representation} $\chi:H\rightarrow \operatorname{Aut}(\fr{m})$, given by $\chi(h)=(d\tau_h)_p:\fr{m}\rightarrow \fr{m}$, $h\in H$.  The representation $\chi$ is equivalent to the adjoint representation of $H$ in $\fr{m}$, and since $H$ is compact (as a closed subgroup of $G$), it induces a $B$-orthogonal splitting

\begin{equation}\label{is}\fr{m}=\fr{m}_1\oplus \cdots \oplus \fr{m}_s\end{equation}

\noindent of $\fr{m}$ into $\operatorname{Ad}(H)$-invariant and $Ad(H)$-irreducible submodules $\fr{m}_i$, $i=1,\dots,s$.  A submodule $\fr{m}_i$ is \emph{equivalent} to a submodule $\fr{m}_j$ if there exists a non-zero \emph{$\operatorname{Ad}(H)$-equivariant map} $\phi:\fr{m}_i\rightarrow \fr{m}_j$, that is, a linear map such that $\phi\circ \operatorname{Ad}(h)=\operatorname{Ad}(h)\circ \phi$, $h\in H$.  Moreover, since $\fr{m}_i,\fr{m}_j$ are irreducible then any non-zero $\operatorname{Ad}(H)$-equivariant map $\phi:\fr{m}_i\rightarrow \fr{m}_j$ is an isomorphism.  Thus, the representation $\chi$ induces an equivalence relation on the set of its irreducible submodules.  For an irreducible submodule $\fr{m}_i\subset \fr{m}$, we denote by $C_{\fr{m}_i}$ be the corresponding \emph{equivalence class}, that is

\begin{equation*}\label{eqcl}C_{\fr{m}_i}=\left\{ {\fr{m}_j \quad \makebox{irreducible submodule of $\chi$}:\fr{m}_j \quad \makebox{is equivalent to}\quad \fr{m}_i} \right\}.\end{equation*} 

The space 

\begin{equation*}S_{\fr{m}_i}=\bigoplus_{\fr{m}_j\in C_{\fr{m}_i}}{\fr{m}_j},\end{equation*}

\noindent is called the corresponding \emph{isotypical summand} of the class $C_{\fr{m}_i}$.  By regroupping the submodules of the decomposition (\ref{is}) into isotypical summands $S_k$, $k=0,1,\dots,N$, we obtain a $B$-orthogonal decomposition

\begin{equation}\label{is1}\fr{m}=S_0\oplus \cdots \oplus S_N, \quad N<s,\end{equation}
of $\fr{m}$.  Here $S_0$ denotes the isotypical summand of $\fr{m}$ generated by the trivial submodules of $\chi$, i.e.,

\begin{equation}\label{S0}S_0=\{X\in \fr{m}:\operatorname{Ad}(h)X=X,\quad h\in H\}.\end{equation}
Moreover, $S_0$ coincides with the Lie algebra of the group $N_G(H)/H$, where $N_G(H)$ is the normalizer of $H$ in $G$.\\
Since any metric endomorphism $A:\fr{m}\rightarrow \fr{m}$ is $\operatorname{Ad}(H)$-equivariant, symmetric, and positive definite, we obtain the following.

\begin{prop}\label{summar}Any metric endomorphism $A:\fr{m}\rightarrow \fr{m}$ decomposes as $A=\left.A\right|_{S_0}+\left.A\right|_{S_1}+\cdots +\left.A\right|_{S_N}$, where each map $\left.A\right|_{S_k}:S_k\rightarrow S_k$, $k=0,1,\dots, N$ is an $\operatorname{Ad}(H)$-equivariant, symmetric and positive definite endomorphism.\end{prop}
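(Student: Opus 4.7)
The plan is to reduce the claim to the statement that $A$ preserves each isotypical summand $S_k$, i.e., $A(S_k)\subseteq S_k$ for $k=0,1,\dots,N$. Once this invariance is established, the decomposition $A = A|_{S_0}+A|_{S_1}+\cdots +A|_{S_N}$ follows immediately from the $B$-orthogonal splitting (\ref{is1}). Each restriction $A|_{S_k}:S_k\rightarrow S_k$ then inherits the required properties essentially for free: $\operatorname{Ad}(H)$-equivariance, because $A$ is equivariant and $S_k$ is $\operatorname{Ad}(H)$-invariant; $B$-symmetry, because $S_k$ is $B$-orthogonal to the complementary summands and $A$ is $B$-symmetric on $\fr{m}$; and positive definiteness, because the restriction of a positive definite operator to any subspace of its domain is positive definite.

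The main step is therefore the invariance $A(S_k)\subseteq S_k$, and the approach is a standard application of Schur's lemma. For $k\neq \ell$, let $\pi_\ell:\fr{m}\rightarrow S_\ell$ denote the $B$-orthogonal projection; since $S_\ell$ is an $\operatorname{Ad}(H)$-invariant summand of the orthogonal decomposition (\ref{is1}), the map $\pi_\ell$ is itself $\operatorname{Ad}(H)$-equivariant. Hence the composition $\pi_\ell \circ A|_{S_k}:S_k\rightarrow S_\ell$ is $\operatorname{Ad}(H)$-equivariant. Decomposing into irreducibles $S_k=\bigoplus_{\fr{m}_i\in C_{\fr{m}_i}}\fr{m}_i$ and $S_\ell=\bigoplus_{\fr{m}_j\in C_{\fr{m}_j}}\fr{m}_j$, and composing with the further projections onto each irreducible component $\fr{m}_j$, we produce $\operatorname{Ad}(H)$-equivariant linear maps $\fr{m}_i\rightarrow \fr{m}_j$ between irreducible submodules lying in \emph{distinct} equivalence classes. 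By the very definition of the equivalence relation given in section \ref{inme}, every such map must vanish. Hence $\pi_\ell\circ A|_{S_k}=0$ for all $\ell\neq k$, which gives $A(S_k)\subseteq S_k$.

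I do not anticipate any real obstacle, since the argument is essentially the content of Schur's lemma in the real orthogonal setting. The only point that requires a brief comment is the trivial isotypical summand $S_0$ in (\ref{S0}): since a trivial irreducible submodule is not equivalent to any non-trivial one, the same Schur-type argument applies to show $\pi_\ell\circ A|_{S_0}=0$ for $\ell\neq 0$, so the conclusion $A(S_0)\subseteq S_0$ holds on the same footing as for the other summands.
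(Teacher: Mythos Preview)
Your proposal is correct and is exactly the standard Schur's-lemma argument that the paper takes for granted; the paper does not give a proof of this proposition but simply states it as an immediate consequence of $A$ being $\operatorname{Ad}(H)$-equivariant, symmetric and positive definite. Your write-up supplies the details the paper omits, and there is nothing to add.
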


We fix a $B$-orthogonal basis $\mathcal{B}$ in $\fr{m}$ adapted to the decomposition (\ref{is1}).  Let $A^{\mathcal{B}}$ and $\left.A^{\mathcal{B}}\right|_{S_k}$, $k=0,\dots,N$, be the matrix representations of $A$ and $\left.A\right|_{S_k}$ respectively, in terms of $\mathcal{B}$.  Then

\begin{equation}\label{gm}A^{\mathcal{B}}=\left(\begin{array}{cccc}
\left.A^{\mathcal{B}}\right|_{S_0} &  0      &  \dots     &0    \\
   0 & \left.A^{\mathcal{B}}\right|_{S_1}    &          \\
   \vdots &  \dots      & \ddots &          \\
   0&  \dots       &  \dots      &\left.A^{\mathcal{B}}\right|_{S_N} \\
 \end{array}\right).\end{equation}

 In turn, for any isotypical summand $S_k=\fr{m}_{k_1}\oplus \cdots \oplus \fr{m}_{k_r}$, the block matrix $\left.A^{\mathcal{B}}\right|_{S_k}$ has the form 

\begin{equation}\label{matrixform}\left.A^{\mathcal{B}}\right|_{S_k}=\left(\begin{array}{cccc}
\mu_1\left.\operatorname{Id}\right|_{\fr{m}_{k_1}} &  A_{21}      &  \dots     &A_{r1}    \\
   A_{12} & \mu_2\left.\operatorname{Id}\right|_{\fr{m}_{k_2}}    &          \\
   \vdots &  \dots      & \ddots &          \\
   A_{1r} &  \dots       &  \dots      &\mu_r\left.\operatorname{Id}\right|_{\fr{m}_{k_r}} \\
 \end{array}\right), \quad \mu_1,\dots,\mu_r>0, \end{equation}

\noindent where each block matrix $A_{lm}$, $1\leq l\neq m\leq r$, corresponds to an $\operatorname{Ad}(H)$-equivariant map $\phi:\fr{m}_{k_l}\rightarrow \fr{m}_{k_m}$.  Moreover, the symmetry of $A$ implies that $A_{ml}=A_{lm}^T$, $1\leq l\neq m\leq r$.  Consequently, for any vector $X_l\in \fr{m}_{k_l}\subset S_k$, $l=1,\dots,r$, it is 

\begin{equation}\label{mfe}A^{\mathcal{B}}X_l=\left.A^{\mathcal{B}}\right|_{S_k}X_l=\mu_lX_l+\sum_{l\neq m=1}^r{A_{lm}X_l}, \quad A_{lm}:\fr{m}_{k_l}\rightarrow \fr{m}_{k_m}.\end{equation}

In particular, if $S_k$ consists of only one submodule then $\left.A^{\mathcal{B}}\right|_{S_k}$ is scalar.\\
For the rest of this paper, any $G$-invariant metric in $G/H$ will be denoted by the corresponding metric endomorphism $A$.  Moreover, if a basis $\mathcal{B}$ of $\fr{m}$ is fixed, then we will make no distinction between the endomorphism $A$ and the matrix $A^{\mathcal{B}}$.\\

A $G$-invariant metric in the manifold $M$ is called \emph{normal with respect to the decomposition \ref{decc}} if the corresponding endomorphism $A:\fr{m}\rightarrow \fr{m}$ is a scalar multiple of the identity endomorphism.  Any normal metric is an obvious solution of Problem \ref{pro2} if we choose $a=0$ in equation (\ref{obv}).  However, obtaining the non-normal solutions of Problem \ref{pro2}, if there exist any, may get particularly complex in large dimensions of $G/H$ due to the existence of non-diagonal elements for $A^{\mathcal B}$, as equation (\ref{matrixform}) shows.  Of course, for any metric endomorphism $A:\fr{m}\rightarrow \fr{m}$, one can always find a basis $\mathcal{B}_A$ of $\fr{m}$ such that $A$ is diagonal in terms of $\mathcal{B}_A$.  However, the choice of $\mathcal{B}_A$ depends on the choice of $A$.  Thus in order to investigate all metrics $A$ satisfying equation (\ref{obv}), it is preferable to fix a basis $\mathcal{B}$ and in turn obtain a simpler form for the candidate  $G$-GO metrics $A$, expressed in terms of $\mathcal{B}$.\\
 To this end, in the next section we find algebraic conditions, under which the forms (\ref{gm}) and (\ref{matrixform}) become simpler when $A$ is any $G$-GO metric.  In particular, let $A:\fr{m}\rightarrow \fr{m}$ be any $G$-GO metric in $M$.  Based on a result in \cite{Al-Ni}, we show that it is possible to identify the eigenvalues $A$ on certain $\operatorname{Ad}(H)$-invariant subspaces of $\fr{m}$ (Proposition \ref{triple}).  In Proposition \ref{en} we simplify the matrix $\left.A^{\mathcal{B}}\right|_{S_0}$, where $S_0$ is given by (\ref{S0}).  In Proposition \ref{class1} we show that it is possible to use certain values of the Lie bracket $[S_k^{\bot},S_k]$, where $S_k$ is an isotypical summand, in order to eliminate the non-diagonal elements of $\left.A^{\mathcal{B}}\right|_{S_k}$.  Finally, in Proposition \ref{class} we obtain orthogonality conditions for the $\operatorname{Ad}(H)$-equivariant isomorphisms between the submodules in $S_k$, under which, $\left.A^{\mathcal{B}}\right|_{S_k}$ is scalar.  The above results are applied in section \ref{prooof} in order to obtain the $U(n)$-GO metrics for the complex Stiefel manifolds (Theorem \ref{mainuu}).

\section{Simplification of GO metrics in compact homogeneous manifolds}\label{s3}

\subsection{Reduction of the number of distinct eigenvalues of a GO metric}\label{redux}

The following propositions permit us to reduce the number of distinct eigenvalues of any $G$-GO metric $A$, on certain pairs or triples of pairwise orthogonal $\operatorname{Ad}(H)$-invariant subspaces $\fr{m}_1,\fr{m}_2,\fr{m}_3\subset \fr{m}$.  If $A:\fr{m}\rightarrow \fr{m}$ is a metric endomorphism and $\lambda$ is an eigenvalue of $A$, we denote by $\fr{m}_{\lambda}$ the corresponding eigenspace in $\fr{m}$.  Moreover, if $\fr{m}_1,\fr{m}_2,\fr{m}_3$ are subspaces of $\fr{m}$, we denote by $[\fr{m}_1,\fr{m}_2]_{\fr{m}_3}$ the orthogonal projection of $[\fr{m}_1,\fr{m}_2]$ on $\fr{m}_3$.  The following is an important property of any $G$-GO metric $A$, proven by Alekseevsky and Nikonorov in \cite{Al-Ni}.
 
\begin{prop}\label{ni}(\cite{Al-Ni})Let $M=G/H$ be a compact homogeneous manifold with the reductive decomposition $\fr{g}=\fr{h}\oplus \fr{m}$ with respect to an $\operatorname{Ad}$-invariant inner product $B$ in $\fr{g}$.  Assume that $A$ is a $G$-GO metric in $M$ and let $\fr{m}=\fr{m}_{\lambda_1}\oplus \cdots \oplus \fr{m}_{\lambda_r}$ be the $A$-eigenspace decomposition of $\fr{m}$ such that $A|_{\fr{m}_{\lambda_i}}=\lambda_i\operatorname{Id}|_{\fr{m}_{\lambda_i}}$, $i=1,\dots, r$.  Then for any $\operatorname{Ad}(H)$-invariant subspaces $\fr{m}_i\subset \fr{m}_{\lambda_i}$, $\fr{m}_j\subset \fr{m}_{\lambda_j}$, $i\neq j$, we have 

\begin{equation}\label{e1}[\fr{m}_i,\fr{m}_j]\subset \fr{m}_i\oplus \fr{m}_j.\end{equation}

Moreover, if $\fr{m},\widetilde{\fr{m}}$ are $A$-orthogonal $\operatorname{Ad}(H)$-invariant subspaces of $\fr{m}_{\lambda_i}$, then

\begin{equation}\label{e2}[\fr{m},\widetilde{\fr{m}}]\subset \fr{h}\oplus \fr{m}_{\lambda_i} .\end{equation}
\end{prop}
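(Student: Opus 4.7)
The plan is to apply the GO condition from Proposition \ref{G} to well-chosen test vectors $X \in \fr{m}$ and extract each inclusion by $B$-orthogonal projection of the resulting identity onto carefully chosen $\operatorname{Ad}(H)$-invariant subspaces of $\fr{g}$. A standing fact I would use throughout is that distinct $A$-eigenspaces are automatically $B$-orthogonal, since $A$ is $B$-symmetric and positive definite, and that an $\operatorname{Ad}(H)$-invariant subspace of a finite-dimensional representation of the compact group $H$ admits an $\operatorname{Ad}(H)$-invariant complement.

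For (\ref{e1}), I would pick arbitrary $X_i \in \fr{m}_i$, $X_j \in \fr{m}_j$ and apply Proposition \ref{G} to $X = X_i + X_j$, with $AX = \lambda_i X_i + \lambda_j X_j$. Expanding $[a+X,AX]=0$ and using skew-symmetry of the bracket rewrites the identity as
$$
\lambda_i [a,X_i] + \lambda_j [a,X_j] + (\lambda_j - \lambda_i)[X_i,X_j] = 0.
$$
Because $\fr{m}_i,\fr{m}_j$ are $\operatorname{Ad}(H)$-invariant and $a \in \fr{h}$, the vectors $[a,X_i]$ and $[a,X_j]$ lie in $\fr{m}_i$ and $\fr{m}_j$ respectively. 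Projecting onto $\fr{h}$ thus yields $(\lambda_j - \lambda_i)[X_i,X_j]_{\fr{h}}=0$, which forces $[X_i,X_j]\in \fr{m}$ since $\lambda_j\neq \lambda_i$. Now fix an $\operatorname{Ad}(H)$-invariant complement $\fr{m}_i'$ of $\fr{m}_i$ inside $\fr{m}_{\lambda_i}$, the analogous $\fr{m}_j'$ inside $\fr{m}_{\lambda_j}$, and note that the projections of $[a,X_i]$ and $[a,X_j]$ onto $\fr{m}_i'$, $\fr{m}_j'$, and any remaining eigenspace $\fr{m}_{\lambda_k}$ all vanish. The same argument kills the corresponding components of $[X_i,X_j]$, and summing these projections gives $[X_i,X_j]\in \fr{m}_i\oplus \fr{m}_j$.

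For (\ref{e2}), applying the GO condition directly to $X=X_1+Y_1$ with both summands in $\fr{m}_{\lambda_i}$ yields only $[a,X]=0$, which does not constrain $[X_1,Y_1]$ usefully; so I would instead reduce (\ref{e2}) to (\ref{e1}) via the $\operatorname{ad}$-skew symmetry of $B$. For $X$ in the first subspace, $Y$ in the second, and $Z \in \fr{m}_{\lambda_k}$ with $k\neq i$, write
$$
B([X,Y],Z) = -B(Y,[X,Z]).
$$
By (\ref{e1}) applied to the $\operatorname{Ad}(H)$-invariant first subspace and $\fr{m}_{\lambda_k}$, the bracket $[X,Z]$ lies in (first subspace) $\oplus\, \fr{m}_{\lambda_k}$. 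Since $Y$ is $B$-orthogonal to the first subspace (the $A$-orthogonality hypothesis reduces to $B$-orthogonality within the common eigenspace $\fr{m}_{\lambda_i}$) and $Y$ is $B$-orthogonal to $\fr{m}_{\lambda_k}$ (distinct $A$-eigenspaces), this pairing vanishes. Hence $[X,Y]$ has no $\fr{m}_{\lambda_k}$-component for any $k\neq i$, giving $[X,Y]\in \fr{h}\oplus \fr{m}_{\lambda_i}$.

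The main obstacle, I expect, is the bookkeeping around (\ref{e1}): since $a=a_X$ is a single vector depending on $X=X_i+X_j$, one must isolate $[X_i,X_j]$ by projecting onto several $\operatorname{Ad}(H)$-invariant pieces at once, rather than by selecting different $a$'s for different components. Verifying that at each such projection the $[a,\cdot]$ contributions really do vanish, and that the auxiliary $\operatorname{Ad}(H)$-invariant complements $\fr{m}_i',\fr{m}_j'$ behave compatibly with the $A$-eigenspace decomposition, is the step that needs care.
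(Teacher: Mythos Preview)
The paper does not supply its own proof of this proposition; it is quoted from \cite{Al-Ni} and used as a black box, so there is nothing in the paper to compare against directly.

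Your argument is correct and is essentially the standard one. For (\ref{e1}) you expand the GO identity for $X=X_i+X_j$ and project $B$-orthogonally onto the $\operatorname{Ad}(H)$-invariant complement of $\fr{m}_i\oplus\fr{m}_j$ in $\fr{g}$; the $[a,\cdot]$ terms vanish there because $[a,X_i]\in\fr{m}_i$ and $[a,X_j]\in\fr{m}_j$, and the nonzero factor $\lambda_j-\lambda_i$ then forces the corresponding component of $[X_i,X_j]$ to vanish. For (\ref{e2}) your reduction to (\ref{e1}) via $B([X,Y],Z)=-B(Y,[X,Z])$ is clean and works exactly as written. One minor simplification: the $B$-orthogonal complement $\fr{m}_i'$ of $\fr{m}_i$ inside $\fr{m}_{\lambda_i}$ is automatically $\operatorname{Ad}(H)$-invariant, since $B$ is $\operatorname{Ad}$-invariant and $\fr{m}_{\lambda_i}$ is $\operatorname{Ad}(H)$-invariant (the latter because $A$ is $\operatorname{Ad}(H)$-equivariant); so you do not need a separate appeal to compactness of $H$ to produce invariant complements. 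The bookkeeping worry you flag is not a real obstacle: a single $a=a_X$ suffices because all the projections are read off simultaneously from the one identity $\lambda_i[a,X_i]+\lambda_j[a,X_j]+(\lambda_j-\lambda_i)[X_i,X_j]=0$.
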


From the above proposition, we obtain the following eigenvalue reduction criteria.

\begin{prop}\label{triple}Let $M=G/H$ be a compact homogeneous manifold with the reductive decomposition $\fr{g}=\fr{h}\oplus \fr{m}$ with respect to an $\operatorname{Ad}$-invariant inner product $B$ in $\fr{g}$.  Assume that $A$ is a $G$-GO metric in $M$.\\

\textbf{1.} If $\lambda_1,\lambda_2$ are eigenvalues of $A$ such that there exist $\operatorname{Ad}(H)$-invariant, pairwise $B$-orthogonal subspaces $\fr{m}_1,\fr{m}_2$ of $\fr{m}$ with

\begin{eqnarray}\label{tr1}&&\fr{m}_i\subset \fr{m}_{\lambda_i},\quad i=1,2 \quad \makebox{and} \\
&&\label{tr2}[\fr{m}_1,\fr{m}_2]_{(\fr{m}_1\oplus \fr{m}_2)^\bot}\neq \left\{ {0} \right\},\end{eqnarray}

then $\lambda_1=\lambda_2$.\\

\textbf{2.} If $\lambda_1,\lambda_2,\lambda_3$ are eigenvalues of $A$ such that there exist $\operatorname{Ad}(H)$-invariant, pairwise $B$-orthogonal subspaces $\fr{m}_1,\fr{m}_2,\fr{m}_3$ of $\fr{m}$ with 

\begin{eqnarray}\label{tr3}&& \fr{m}_i\subset \fr{m}_{\lambda_i},\quad i=1,2,3 \quad \makebox{and} \\
&& \label{tr4}[\fr{m}_1,\fr{m}_2]_{\fr{m}_3}\neq \left\{ {0} \right\},\end{eqnarray} 

then $\lambda_1=\lambda_2=\lambda_3$.

\end{prop}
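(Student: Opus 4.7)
The plan is to derive both parts as direct contrapositives of Proposition~\ref{ni}; the real content is figuring out which of the two constraints (\ref{e1}) or (\ref{e2}) applies, and unpacking the orthogonality setup carefully.

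For Part~1, I would argue by contradiction. Suppose $\lambda_1\neq \lambda_2$. Since $\fr{m}_1\subset \fr{m}_{\lambda_1}$ and $\fr{m}_2\subset \fr{m}_{\lambda_2}$ are $\operatorname{Ad}(H)$-invariant subspaces sitting in \emph{distinct} eigenspaces, inclusion (\ref{e1}) of Proposition~\ref{ni} applies and gives $[\fr{m}_1,\fr{m}_2]\subset \fr{m}_1\oplus \fr{m}_2$. Projecting onto the $B$-orthogonal complement of $\fr{m}_1\oplus \fr{m}_2$ inside $\fr{m}$ then yields $[\fr{m}_1,\fr{m}_2]_{(\fr{m}_1\oplus \fr{m}_2)^{\bot}}=\{0\}$, contradicting (\ref{tr2}). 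Hence $\lambda_1=\lambda_2$.

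For Part~2, the idea is to split the argument into two reductions. First, I would note that since $\fr{m}_3$ is $B$-orthogonal to both $\fr{m}_1$ and $\fr{m}_2$, it sits inside $(\fr{m}_1\oplus\fr{m}_2)^{\bot}$, so assumption (\ref{tr4}) already forces $[\fr{m}_1,\fr{m}_2]_{(\fr{m}_1\oplus\fr{m}_2)^{\bot}}\neq\{0\}$; Part~1 then delivers $\lambda_1=\lambda_2$. Second, with $\lambda_1=\lambda_2$ in hand, the subspaces $\fr{m}_1,\fr{m}_2\subset \fr{m}_{\lambda_1}$ are $B$-orthogonal $\operatorname{Ad}(H)$-invariant subspaces of a common eigenspace, hence $A$-orthogonal (because $A|_{\fr{m}_{\lambda_1}}=\lambda_1\operatorname{Id}$). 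This puts us in the setting of inclusion (\ref{e2}) of Proposition~\ref{ni}, yielding $[\fr{m}_1,\fr{m}_2]\subset \fr{h}\oplus \fr{m}_{\lambda_1}$. If $\lambda_3\neq \lambda_1$, then $\fr{m}_3\subset \fr{m}_{\lambda_3}$ is $B$-orthogonal to $\fr{m}_{\lambda_1}$ (as distinct eigenspaces of the symmetric operator $A$) and also to $\fr{h}$ (as $\fr{m}_3\subset \fr{m}$), so the projection $[\fr{m}_1,\fr{m}_2]_{\fr{m}_3}$ would vanish, contradicting (\ref{tr4}). Therefore $\lambda_3=\lambda_1$, and combined with $\lambda_1=\lambda_2$ we obtain $\lambda_1=\lambda_2=\lambda_3$.

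I do not expect any serious obstacles: the whole argument is a bookkeeping exercise in identifying the correct orthogonality relations among $\fr{h}$, the eigenspaces of $A$, and the given $\operatorname{Ad}(H)$-invariant subspaces, and then invoking the appropriate case of Proposition~\ref{ni}. The only mild care required is the observation that $B$-orthogonality inside a fixed eigenspace of $A$ is the same as $A$-orthogonality, which is what allows Part~2 to switch from hypothesis (\ref{e1}) in its first step to hypothesis (\ref{e2}) in its second step.
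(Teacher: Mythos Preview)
Your proposal is correct and follows essentially the same approach as the paper's own proof: both parts are obtained as direct consequences of Proposition~\ref{ni}, using (\ref{e1}) for Part~1 and, after reducing to $\lambda_1=\lambda_2$ via Part~1, using (\ref{e2}) for Part~2. Your remark that $B$-orthogonality coincides with $A$-orthogonality on a fixed eigenspace is exactly the observation the paper records at the outset of its proof, and your contrapositive phrasing of the $\lambda_3=\lambda_1$ step is equivalent to (and arguably cleaner than) the paper's version.
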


\begin{proof}
We first note that in view of equation (\ref{atp}), the pairwise $B$-orthogonality of any pair of eigenvectors $X,Y$ of $A$ is equivalent to the pairwise $A$-orthogonality of $X,Y$.  Therefore, since the submodules $\fr{m}_i$ are subsets of eigenspaces of $A$, the $B$-orthogonality and $A$-orthogonality assumptions for the $\fr{m}_i$ in Propositions \ref{triple} and \ref{ni} respectively, are equivalent.\\
For the proof of part \textbf{1.}, suppose that $\lambda_1\neq \lambda_2$.  By taking into account relation (\ref{tr1}) and the fact that $A$ is a $G$-GO metric, then equation (\ref{e1}) of Proposition \ref{ni} implies that $[\fr{m}_1,\fr{m}_2]\subset \fr{m}_1\oplus \fr{m}_2$, which contradicts equation (\ref{tr2}) in the hypothesis.  Therefore, $\lambda_1=\lambda_2$.\\
For the proof of part \textbf{2.}, we first observe that by virtue of part \textbf{1.}, equations (\ref{tr3}) and (\ref{tr4}) imply that $\lambda_1=\lambda_2$.  It remains to prove that $\lambda_1=\lambda_3$.  Since $\lambda_1=\lambda_2$, equation (\ref{tr3}) implies that $\fr{m}_1,\fr{m}_2\subset \fr{m}_{\lambda_1}$.  Equation (\ref{e2}) of Proposition \ref{ni}, in turn implies that 

\begin{equation*}\label{then}[\fr{m}_1,\fr{m}_2]\subset \fr{h}\oplus \fr{m}_{\lambda_1}.\end{equation*}

Therefore, we have that 

\begin{equation}\label{5}[\fr{m}_1,\fr{m}_2]_{\fr{m}_3}\subset \fr{m}_{\lambda_1}.\end{equation}

However, by equation (\ref{tr3}) we have that $\fr{m}_3\subset \fr{m}_{\lambda_3}$ hence

\begin{equation}\label{6}[\fr{m}_1,\fr{m}_2]_{\fr{m}_3}\subset \fr{m}_{\lambda_3}.\end{equation}
   
	Since $[\fr{m}_1,\fr{m}_2]_{\fr{m}_3}\neq \left\{ {0} \right\}$, then by equations (\ref{5}) and (\ref{6}) we obtain that $\lambda_1=\lambda_3$.  
\end{proof}


\subsection{Diagonalization of GO metrics on isotypical summands}\label{dia}

The following results allow us to simplify the form (\ref{matrixform}) for certain isotypical summands $S_k$, under the condition that $A$ is a $G$-GO metric.\\  
  Let $S_0$ be the isotypical summand defined by relation (\ref{S0}).  Then $S_0$ is the Lie algebra of the group $N_G(H)/H$.  We begin by stating the following general result.

\begin{prop}\label{goniso}
Let $(G/H,A)$ be a compact GO space with $H$ connected.  Then $N_G(H)/H$ endowed with the metric $\left.A\right|_{S_0}$ is a GO space.\end{prop}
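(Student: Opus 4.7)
The plan is to view $N_G(H)/H$ as a compact homogeneous space $N/H$, where $N:=N_G(H)$, and then apply Proposition \ref{G} directly.  Since $G$ is compact and $N$ is closed in $G$, $N$ is compact as well, so $N/H$ is a compact homogeneous manifold to which the setup of Section \ref{ps} applies.

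First I would verify that, because $H$ is connected, the Lie algebra $\fr{n}_{\fr{g}}(\fr{h})$ of $N$ decomposes $B$-orthogonally as $\fr{h}\oplus S_0$.  Indeed, an element $X\in\fr{m}$ lies in $\fr{n}_{\fr{g}}(\fr{h})$ exactly when $[X,\fr{h}]\subset\fr{h}$; but the reductivity of (\ref{decc}) forces $[X,\fr{h}]\subset\fr{m}$, so the condition collapses to $\operatorname{ad}(\fr{h})X=0$, which for connected $H$ is equivalent to $X\in S_0$.  Consequently the reductive decomposition of $N/H$ is just $\fr{n}_{\fr{g}}(\fr{h})=\fr{h}\oplus S_0$, inherited from the one of $G/H$ with the same restricted $\operatorname{Ad}$-invariant product $B$.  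Checking that $\left.A\right|_{S_0}$ defines an $N$-invariant metric on $N/H$ is then immediate: by Proposition \ref{summar} the restriction is symmetric and positive definite, and the required $\operatorname{Ad}(H)$-equivariance is automatic from (\ref{S0}), as $\operatorname{Ad}(H)$ acts as the identity on $S_0$.

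The main step is to verify the GO criterion of Proposition \ref{G} for the pair $(N/H,\left.A\right|_{S_0})$: given $X\in S_0$, one must exhibit $a\in\fr{h}$ with $[a+X,\left.A\right|_{S_0}X]=0$.  Since $S_0\subset\fr{m}$, applying Proposition \ref{G} to the $G$-GO space $(G/H,A)$ at $X$ supplies some $a\in\fr{h}$ with $[a+X,AX]=0$ in $\fr{g}$.  By Proposition \ref{summar} the vector $AX=\left.A\right|_{S_0}X$ lies in $S_0$, and (\ref{S0}) gives $[\fr{h},S_0]=0$, whence $[a,AX]=0$.  Therefore $[X,\left.A\right|_{S_0}X]=0$, and the GO condition for $N/H$ is in fact satisfied with the choice $a=0$.

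I do not anticipate any serious obstacle: the argument rests entirely on the commutation $[\fr{h},S_0]=0$, which simultaneously identifies $S_0$ with the tangent space of $N/H$ at the origin and annihilates the isotropy term in the bracket $[a+X,AX]$.  The only point that deserves explicit mention is that $[S_0,S_0]$ actually lies inside $\fr{n}_{\fr{g}}(\fr{h})=\fr{h}\oplus S_0$, so that the computation of $[X,\left.A\right|_{S_0}X]$ truly takes place in the Lie algebra of $N$; this is an immediate consequence of the Jacobi identity applied to $\operatorname{ad}(\fr{h})[X,Y]$ for $X,Y\in S_0$.
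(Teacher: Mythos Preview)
Your proof is correct and follows essentially the same route as the paper: both identify $S_0$ with the tangent space of $N_G(H)/H$ at the origin, then use the $G$-GO condition at $X\in S_0$ together with $AX\in S_0$ and $[\fr{h},S_0]=0$ to conclude $[X,\left.A\right|_{S_0}X]=0$.  The only noteworthy difference is that the paper invokes Corollary~\ref{poritsis} (hence the external result from \cite{Ni-2}) to obtain $\operatorname{Ad}(N_G(H))$-equivariance of $\left.A\right|_{S_0}$ before passing to the Lie-group viewpoint on $N_G(H)/H$, whereas you work directly with the homogeneous space $N/H$ and need only the trivial $\operatorname{Ad}(H)$-equivariance on $S_0$; your argument is therefore slightly more self-contained.
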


\begin{proof}
	
	The isotypical summand $S_0$ is the Lie algebra of the group $N_G(H)/H$.  By virtue of Corollary \ref{poritsis}, the endomorphism $\left.A\right|_{S_0}$ is $\operatorname{Ad}(N_G(H))$-equivariant.  Therefore, $A$ restricts to an $N_G(H)$-invariant GO metric in $N_G(H)/H$.  Moreover, since $gH=Hg$, $g\in N_G(H)$, then any $N_G(H)$-invariant metric in $N_G(H)/H$ is also $N_G(H)/H$-invariant.  Therefore, $\left.A\right|_{S_0}$ is an $N_G(H)/H$-invariant metric on $N_G(H)/H$.\\  It remains to show that $\left.A\right|_{S_0}$ is a GO metric.  Indeed, choose $X\in S_0$.  Since $A$ is a GO metric in $G$, there exists an $a_X\in \fr{h}$ such that
	
	\begin{equation}\label{sat1}[a_X+X,AX]=0.\end{equation}
	 
	Taking into account that $X\in S_0$ we obtain that 
	
	\begin{equation}\label{sat2}AX=\left.A\right|_{S_0}X\in S_0.\end{equation}
		
		Moreover, since $S_0$ is the Lie algebra of the group $N_G(H)/H$ we have that 
			
			\begin{equation}\label{sat3}[S_0,X]=\{0\}.\end{equation}
			
			Using equations (\ref{sat2}) and (\ref{sat3}) in equation (\ref{sat1}) we obtain that
	
	\begin{equation*}\label{sat}[X,\left.A\right|_{S_0}X]=0,\end{equation*}
	
\noindent which, by virtue of Proposition \ref{G}, implies that $\left.A\right|_{S_0}X$ is a GO metric.  Therefore $N_G(H)/H$ is endowed with an $N_G(H)/H$-invariant GO metric $\left.A\right|_{S_0}$, hence $(N_G(H)/H,\left.A\right|_{S_0})$ is a GO space.
	\end{proof}

	In order to simplify $\left.A\right|_{S_0}$ we need the following proposition which characterizes the $G$-invariant GO-metrics in any compact Lie group $G$.

\begin{prop}\label{enn}
Let $G$ be a compact Lie group with Lie algebra $\fr{g}$ and let

\begin{equation*}\fr{g}=\fr{z}(\fr{g})\oplus \fr{s}_1\oplus \cdots \oplus \fr{s}_m,\end{equation*}

\noindent be the $B$-orthogonal decomposition of $\fr{g}$ into the direct sum of its center $\fr{z}(\fr{g})$ and simple ideals $\fr{s}_i$, where $B$ is an $\operatorname{Ad}$-invariant inner product on $\fr{g}$.  A $G$-invariant metric $A$ in $G$ is a $G$-GO metric if and only if $A$ has the form

\begin{equation}\label{form}A=C_{\fr{z}(\fr{g})}+\lambda_1\left.\operatorname{Id}\right|_{\fr{s}_1}+\cdots +\lambda_m\left.\operatorname{Id}\right|_{\fr{s}_m},\end{equation}

where $C_{\fr{z}(\fr{g})}:\fr{z}(\fr{g})\rightarrow \fr{z}(\fr{g})$ is any positive definite symmetric operator.
\end{prop}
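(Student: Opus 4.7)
The plan is to apply Proposition \ref{G} in the special case $H=\{e\}$, so that $\fr{h}=0$ and the GO criterion collapses to the single condition $[X,AX]=0$ for every $X\in\fr{g}$. Polarizing this in $X$ (i.e.\ expanding $[X+tY,A(X+tY)]=0$ in $t$) yields the equivalent bilinear form
\begin{equation*}
[X,AY]+[Y,AX]=0, \quad X,Y\in\fr{g},
\end{equation*}
which is the workhorse of everything that follows.

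For the ``if'' direction I would just plug in. Writing $X=X_0+\sum_i X_i$ and $Y=Y_0+\sum_j Y_j$ with $X_0,Y_0\in\fr{z}(\fr{g})$ and $X_i,Y_j\in\fr{s}_i,\fr{s}_j$, the form (\ref{form}) gives $AX=C_{\fr{z}(\fr{g})}X_0+\sum_i \lambda_i X_i$, and in $[X,AX]$ every cross term $[X_i,\lambda_j X_j]$ with $i\ne j$ vanishes because distinct simple ideals commute, the center-involving terms vanish because $X_0, AX_0\in\fr{z}(\fr{g})$, and the diagonal terms $\lambda_i[X_i,X_i]$ vanish trivially.

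For the ``only if'' direction I would proceed in three steps. First, taking $X\in\fr{z}(\fr{g})$ in the polarized identity makes $[X,AY]=0$, so $[Y,AX]=0$ for all $Y$, forcing $AX\in\fr{z}(\fr{g})$; hence $A$ preserves $\fr{z}(\fr{g})$, and by $B$-symmetry it also preserves the $B$-orthogonal complement $\fr{s}_1\oplus\cdots\oplus\fr{s}_m$. Second, for $X\in\fr{s}_i$ and $Y\in\fr{s}_j$ with $i\ne j$, decompose $AY=\sum_k(AY)_k$ along the simple ideals; since $\fr{s}_k$ and $\fr{s}_i$ commute when $k\ne i$, we get $[X,AY]=[X,(AY)_i]\in\fr{s}_i$ and symmetrically $[Y,AX]\in\fr{s}_j$. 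The polarized identity therefore forces $[X,(AY)_i]=0$ for every $X\in\fr{s}_i$, so $(AY)_i\in\fr{z}(\fr{s}_i)=0$ by simplicity; thus $A$ preserves each $\fr{s}_i$, and (\ref{form}) reduces to showing that each $A_i:=\left.A\right|_{\fr{s}_i}$ is scalar.

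The main obstacle, and the last step, is showing $A_i=\lambda_i\operatorname{Id}|_{\fr{s}_i}$. Decompose $\fr{s}_i=\bigoplus_\alpha E_{\lambda_\alpha}$ into $A_i$-eigenspaces (these are $B$-orthogonal by symmetry of $A_i$). Applying the polarized identity to $X_\alpha\in E_{\lambda_\alpha}$, $Y_\beta\in E_{\lambda_\beta}$ gives $(\lambda_\beta-\lambda_\alpha)[X_\alpha,Y_\beta]=0$, so $[E_{\lambda_\alpha},E_{\lambda_\beta}]=0$ for $\alpha\ne\beta$. To conclude that $E_{\lambda_\alpha}$ is an ideal, I would invoke $\operatorname{ad}$-invariance of $B$: for $X,Y\in E_{\lambda_\alpha}$ and $Z\in E_{\lambda_\beta}$ with $\beta\ne\alpha$,
\begin{equation*}
B([X,Y],Z)=-B(Y,[X,Z])=0,
\end{equation*}
so $[E_{\lambda_\alpha},E_{\lambda_\alpha}]\subset E_{\lambda_\alpha}$ and combined with the vanishing of cross-brackets, each $E_{\lambda_\alpha}$ is a nonzero ideal of $\fr{s}_i$. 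Simplicity of $\fr{s}_i$ then forces $E_{\lambda_\alpha}=\fr{s}_i$, i.e.\ only one eigenvalue occurs. Finally, the restriction $A|_{\fr{z}(\fr{g})}$ is unconstrained by $[X,AX]=0$ (since $\fr{z}(\fr{g})$ is abelian), so it may be an arbitrary $B$-symmetric positive definite $C_{\fr{z}(\fr{g})}$, yielding exactly the form (\ref{form}).
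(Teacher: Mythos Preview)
Your argument is correct, but it takes a genuinely different route from the paper's proof. The paper dispatches the proposition in two lines by citing external results: first, that a $G$-invariant metric on a compact Lie group $G$ is $G$-GO if and only if it is bi-invariant (Proposition~8 in \cite{Al-Ni}); second, that the bi-invariant metrics are exactly those of the form~(\ref{form}) (\cite{D-Z}, \cite{Mi}). You instead give a self-contained elementary argument working directly from the condition $[X,AX]=0$, without ever naming bi-invariance as an intermediate notion. In effect, your polarized identity $[X,AY]=[AX,Y]$ and the subsequent eigenspace/ideal analysis reprove from scratch the relevant content of both cited results. The paper's approach is shorter on the page but outsources the substance; yours is longer but makes the proposition independent of the references, and it isolates exactly which structural features of $\fr{g}$ (centrality, commutation of distinct simple ideals, simplicity forcing a single eigenvalue) drive each piece of the conclusion.
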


\begin{proof}
We recall that a $G$-invariant metric $A$, in a compact Lie group $G$, is a $G$-GO metric if and only if $A$ is a bi-invariant metric (\cite{Al-Ni}, Proposition 8).  Moreover, any bi-invariant metric $A$ in $G$ has the form 	
		
		\begin{equation}\label{form}A=C_{\fr{z}(\fr{g})}+\lambda_1\operatorname{Id}|_{\fr{s}_1}+\cdots +\lambda_m\operatorname{Id}|_{\fr{s}_m}\end{equation}

(\cite{D-Z},\cite{Mi}).
This completes the proof.
\end{proof}
 
We obtain the following.

\begin{prop}\label{en}Let $M=G/H$ be a compact homogeneous manifold with $H$ connected.   Assume the reductive decomposition $\fr{g}=\fr{h}\oplus \fr{m}$ with respect to an $\operatorname{Ad}$-invariant inner product $B$ in $\fr{g}$, and let $S_0\subset \fr{m}$ be the isotypical summand defined by relation (\ref{S0}).  Moreover, let $S_0=\fr{z}(S_0)\oplus \fr{s}_1\oplus \cdots \oplus \fr{s}_m$ be the $B$-orthogonal decomposition of $S_0$ as a direct sum of abelian and simple ideals.  Then for any $G$-GO metric $A$ in $M$ it is  

\begin{equation*}\label{form}\left.A\right|_{S_0}=C_{\fr{z}(S_0)}+\lambda_1\operatorname{Id}|_{\fr{s}_1}+\cdots +\lambda_m\operatorname{Id}|_{\fr{s}_m},\end{equation*}

where $C_{\fr{z}(S_0)}:\fr{z}(S_0)\rightarrow \fr{z}(S_0)$ is any positive definite symmetric operator.
\end{prop}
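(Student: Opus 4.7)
The proof should be a quick assembly of the two machines already built in this subsection, namely Propositions \ref{goniso} and \ref{enn}. The plan is to transfer the problem from $M=G/H$ to the compact Lie group $N_G(H)/H$, where the structure of GO metrics is completely understood.

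First I would observe that, since $G$ is compact, $N_G(H)$ is closed in $G$ and $H$ is closed in $N_G(H)$, so the quotient $N_G(H)/H$ is a compact Lie group. Moreover, the assumption that $H$ is connected gives $H=H_o$, so the Lie algebra of $N_G(H)/H$ is precisely the isotypical summand $S_0$ defined by (\ref{S0}); this identification is already recorded after (\ref{S0}) and used in the proof of Proposition \ref{goniso}. The hypothesis that $A$ is a $G$-GO metric in $M$ then allows me to invoke Proposition \ref{goniso}, which states that the restricted metric $\left.A\right|_{S_0}$ defines an $N_G(H)/H$-invariant GO metric on the compact Lie group $N_G(H)/H$.

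Next, I would apply Proposition \ref{enn} to the compact Lie group $K:=N_G(H)/H$ with its Lie algebra $\fr{k}=S_0$. The given decomposition
\begin{equation*}
S_0=\fr{z}(S_0)\oplus \fr{s}_1\oplus \cdots \oplus \fr{s}_m
\end{equation*}
is exactly the splitting of $\fr{k}$ into its center and simple ideals required by the hypothesis of Proposition \ref{enn}. Hence that proposition forces the GO metric $\left.A\right|_{S_0}$ to take the form
\begin{equation*}
\left.A\right|_{S_0}=C_{\fr{z}(S_0)}+\lambda_1\operatorname{Id}|_{\fr{s}_1}+\cdots +\lambda_m\operatorname{Id}|_{\fr{s}_m},
\end{equation*}
with $C_{\fr{z}(S_0)}$ a positive definite symmetric operator on $\fr{z}(S_0)$ and $\lambda_i>0$, which is precisely the claim.

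There is essentially no obstacle once the two earlier propositions are in place; the only mild point to check is the compatibility of the inner products, i.e.\ that the decomposition of $S_0$ used in Proposition \ref{enn} is orthogonal with respect to the ambient $\operatorname{Ad}$-invariant product $B$ restricted to $S_0$. This is automatic, since the center and the simple ideals of any compact Lie algebra are mutually orthogonal with respect to every $\operatorname{Ad}$-invariant inner product, and $B|_{S_0}$ is such a product on $S_0$ (being the restriction of an $\operatorname{Ad}(G)$-invariant form). Thus the proof reduces to citing Proposition \ref{goniso} followed by Proposition \ref{enn}, with no further computation required.
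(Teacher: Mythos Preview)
Your proposal is correct and matches the paper's proof exactly: the paper simply states that the result follows by combining Propositions \ref{goniso} and \ref{enn}, which is precisely the two-step reduction you carry out. The additional remarks you include about compactness of $N_G(H)/H$ and compatibility of the $B$-orthogonal decomposition are sound elaborations of what the paper leaves implicit.
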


\begin{proof}
    The result is obtained by combining Propositions \ref{goniso} and \ref{enn}.
\end{proof}	
	
In the following proposition we use certain values of the bracket $[S_k^{\bot},S_k]$, where $S_k$ is an isotypical summand, in order to diagonalize any $G$-GO metric $A$ restricted on $S_k$.

\begin{prop}\label{class1}Let $M=G/H$ be a compact homogeneous manifold, with the reductive decomposition $\fr{g}=\fr{h}\oplus \fr{m}$ with respect to an $\operatorname{Ad}$-invariant inner product $B$ in $\fr{g}$.  Choose a $B$-orthogonal basis $\mathcal{B}$ of $\fr{m}$ and let $S_k=\fr{m}_{k_1}\oplus \cdots \oplus \fr{m}_{k_r}\subset \fr{m}$ be an isotypical summand of the isotropy representation of $H$ in $\fr{m}$.  Assume that for each submodule $\fr{m}_{k_l}$, $l=1,\dots,r$ and for every $v\in S_k$ with non-zero projection on $\fr{m}_{k_l}$, there exists a vector $X_l^v\in S_k^{\bot}$ such that
\begin{equation*}\label{ssst}[X_l^v,v]\in \fr{m}_{k_l}\setminus \left\{ {0} \right\}.\end{equation*}
Then any $G$-GO metric $A$ in $M$ is diagonal on $S_k$, i.e. 

\begin{equation*}\label{inis}\left.A\right|_{S_k}=\mu_1\left.\operatorname{Id}\right|_{\fr{m}_{k_1}}+ \cdots + \mu_r\left.\operatorname{Id}\right|_{\fr{m}_{k_r}}.\end{equation*}
\end{prop}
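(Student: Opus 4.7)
The plan is to argue by contradiction, with the goal of producing a triple of subspaces that violates Proposition \ref{triple}(2). I would first unpack non-diagonality in terms of the $A$-eigenspace structure on $S_k$: since $A$ preserves $S_k$ by Proposition \ref{summar} and is $B$-symmetric, its eigenspaces inside $S_k$ are the $\operatorname{Ad}(H)$-invariant, mutually $B$-orthogonal subspaces $E_{\nu_i}:=\fr{m}_{\nu_i}\cap S_k$. A $B$-symmetric $\operatorname{Ad}(H)$-equivariant self-map of the irreducible module $\fr{m}_{k_l}$ is a scalar by Schur, so $\left.A\right|_{S_k}$ having the diagonal form (\ref{matrixform}) is equivalent to each $\fr{m}_{k_l}$ being contained in some single $E_{\nu_i}$. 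Assuming this fails for some $l$, I would note that $\fr{m}_{k_l}\subset E_{\nu_i}$ is equivalent to $\bigoplus_{j\neq i}E_{\nu_j}\subset \ker\pi_l$ (where $\pi_l:S_k\to \fr{m}_{k_l}$ is the $B$-orthogonal projection) by taking $B$-orthogonal complements in $S_k$. The failure of this for every $i$ produces two distinct eigenvalues $\nu_1\neq \nu_2$ with $\pi_l(E_{\nu_1})=\pi_l(E_{\nu_2})=\fr{m}_{k_l}$, and the same complement argument combined with the irreducibility of $\fr{m}_{k_l}$ yields $\fr{m}_{k_l}\cap E_{\nu_i}=\{0\}$ for every $i$.

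Next, I would pick $v_1\in E_{\nu_1}$ with $\pi_l(v_1)\neq 0$ and apply the hypothesis to obtain $X\in S_k^\perp$ with $[X,v_1]\in\fr{m}_{k_l}\setminus\{0\}$. Decomposing $X=\sum_\mu X^\mu$ with $X^\mu\in\fr{m}_\mu\cap S_k^\perp$ along the $A$-eigenspace splitting on $S_k^\perp$, for each $\mu\neq\nu_1$ Proposition \ref{ni} equation (\ref{e1}), applied to the $\operatorname{Ad}(H)$-invariant subspaces $\fr{m}_\mu\cap S_k^\perp\subset \fr{m}_\mu$ and $E_{\nu_1}\subset \fr{m}_{\nu_1}$, forces
\begin{equation*}
[X^\mu,v_1]\in (\fr{m}_\mu\cap S_k^\perp)\oplus E_{\nu_1},
\end{equation*}
which is $B$-orthogonal to every $E_{\nu_i}$, $i\neq 1$. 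Because $\fr{m}_{k_l}\cap E_{\nu_i}=\{0\}$ for every $i$, the non-zero element $[X,v_1]\in \fr{m}_{k_l}$ cannot lie in $E_{\nu_1}$ alone, so there is some $i\neq 1$ with $y_i:=P_{E_{\nu_i}}([X,v_1])\neq 0$; combining with the vanishing above, $y_i=P_{E_{\nu_i}}([X^{\nu_1},v_1])$.

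Finally, I would apply Proposition \ref{triple}(2) to the triple $\fr{m}_1:=\fr{m}_{\nu_1}\cap S_k^\perp$, $\fr{m}_2:=E_{\nu_1}=\fr{m}_{\nu_1}\cap S_k$, $\fr{m}_3:=E_{\nu_i}$: these are pairwise $B$-orthogonal and $\operatorname{Ad}(H)$-invariant with $\fr{m}_1,\fr{m}_2\subset \fr{m}_{\nu_1}$ and $\fr{m}_3\subset \fr{m}_{\nu_i}$, and $[\fr{m}_1,\fr{m}_2]_{\fr{m}_3}\ni y_i\neq 0$, forcing $\nu_1=\nu_i$ and contradicting $\nu_i\neq\nu_1$. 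The main obstacle is the bookkeeping between the structural splitting $S_k=\bigoplus_l\fr{m}_{k_l}$ and the spectral splitting $S_k=\bigoplus_i E_{\nu_i}$: once the surviving $E_{\nu_i}$-component of $[X,v_1]$ is isolated as coming only from the $\mu=\nu_1$ slice of $X$, the triple criterion of Proposition \ref{triple}(2) closes the argument at once.
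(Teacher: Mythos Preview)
Your argument is correct, but it follows a different route from the paper's proof. The paper does not argue by contradiction and never invokes Proposition~\ref{triple}. Instead, for each $l_o$ it picks an eigenvector $v_{l_o}\in S_k$ with non-zero projection on $\fr{m}_{k_{l_o}}$, applies the hypothesis to produce $Z_o:=[X_{l_o}^{v_{l_o}},v_{l_o}]\in\fr{m}_{k_{l_o}}\setminus\{0\}$, and then shows \emph{directly} that $Z_o$ is itself an eigenvector for the same eigenvalue $\lambda$: for any eigenvector $Y\in\fr{m}_\mu\cap S_k$ with $\mu\neq\lambda$, one computes $B(Z_o,Y)=B(X_{l_o}^{v_{l_o}},[v_{l_o},Y])$, and this vanishes because $[v_{l_o},Y]\in\fr{m}_\lambda\oplus\fr{m}_\mu\subset S_k$ by Proposition~\ref{ni} while $X_{l_o}^{v_{l_o}}\in S_k^\perp$. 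Once $Z_o$ is an eigenvector lying in $\fr{m}_{k_{l_o}}$, comparing with the block form~(\ref{mfe}) forces every off-diagonal $A_{l_om}$ to kill $Z_o$, hence to vanish by Schur.

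The difference is essentially where Proposition~\ref{ni} is spent. The paper uses it once, together with the $\operatorname{ad}$-skew symmetry of $B$, to prove $Z_o\perp\fr{m}_\mu$ for all $\mu\neq\lambda$ in one line; no eigenspace decomposition of $X$ is needed. Your approach decomposes $X$ along the eigenspaces in $S_k^\perp$, applies Proposition~\ref{ni} term by term to isolate the $\nu_1$-slice, and then closes with the triple criterion of Proposition~\ref{triple}(2). Your route is a bit heavier on bookkeeping but has the virtue of making explicit the interplay between the structural splitting $S_k=\bigoplus_l\fr{m}_{k_l}$ and the spectral splitting $S_k=\bigoplus_i E_{\nu_i}$; the paper's route is shorter because the single identity $B([X,v],Y)=B(X,[v,Y])$ absorbs that bookkeeping.
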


\begin{proof}
We recall the general form (\ref{matrixform}) of the matrix $\left.A^{\mathcal{B}}\right|_{S_k}$.  In order to prove Proposition \ref{class1}, it suffices to prove that the block matrices $A_{lm}$ in (\ref{matrixform}) are zero, for any $l,m=1,\dots,r$ with $l\neq m$.\\
Choose any $l_o=1,\dots,r$.  Since the restricted endmorphism $\left.A\right|_{S_k}:S_k\rightarrow S_k$ is symmetric, then $S_k$ admits a basis consisting of eigenvectors of $\left.A\right|_{S_k}$.  Therefore, there exists an eigenvector $v_{l_o}$ of $\left.A\right|_{S_k}$, such that $v_{l_o}$ has non-zero projection on the submodule $\fr{m}_{k_{l_o}}\subset S_k$.  Let $\lambda$ be the corresponding eigenvalue of $v_{l_o}$, i.e. $v_{l_o}\in \fr{m}_{\lambda}$.  Moreover, consider the vector $X_{l_o}^{v_{l_o}}\in S_k^{\bot}$ as in the hypothesis of Proposition \ref{class1}.  We set 
\begin{equation*}Z_o=[X_{l_o}^{v_{l_o}},v_{l_o}]\in \fr{m}_{k_{l_o}}\setminus \left\{ {0} \right\}.\end{equation*}
We will first prove that $Z_o$ is also an eigenvector of $\left.A\right|_{S_k}$ with eigenvalue $\lambda$.\\
Indeed, let $\mu\neq \lambda$ be another eigenvalue of $\left.A\right|_{S_k}$ and let $Y\in \fr{m}_{\mu}\subset S_k$ be a corresponding eigenvector.  Since $A$ is a $G$-GO metric, Proposition \ref{ni} implies that 
\begin{equation}\label{impth}[v_{l_o},Y]\in [\fr{m}_{\lambda},\fr{m}_{\mu}]\subset \fr{m}_{\lambda}\oplus \fr{m}_{\mu}\subset S_k.\end{equation}
Moreover, taking into account the fact that $X_{l_o}^{v_{l_o}}\in S_k^{\bot}$ as well as relation (\ref{impth}), and using the $\operatorname{ad}$-skew symmetry of $B$, we obtain that
\begin{equation*}B(Z_o,Y)=B([X_{l_o}^{v_{l_o}},v_{l_o}],Y)=B(X_{l_o}^{v_{l_o}},[v_{l_o},Y])=0,\end{equation*}
which immediately implies that the vector $Z_o$ is orthogonal to $\fr{m}_{\mu}$.  Since the eigenvalue $\mu$ is arbitrary, we deduce that $Z_o$ is orthogonal to any eigenspace $\fr{m}_{\mu}$, $\mu\neq \lambda$.  By also taking into account that the summand $S_k$ admits a $B$-orthogonal decomposition into eigenspaces of $\left.A\right|_{S_k}$, we conclude that $Z_o$ is an eigenvector of $\left.A\right|_{S_k}$ with eigenvalue $\lambda$, i.e. 
\begin{equation}\label{d2}\left.A\right|_{S_k}Z_o=\lambda Z_o.\end{equation}\\
On the other hand, $Z_o\in \fr{m}_{k_{l_o}}$, therefore by using equation (\ref{mfe}) we have that 
\begin{equation}\label{d1}\left.A\right|_{S_k}Z_o=\mu_{l_o}Z_o+\sum_{l_o\neq m=1}^rA_{l_om}Z_o,\end{equation}
where $A_{l_om}Z_o\in \fr{m}_{k_m}$, $l_o\neq m=1,\dots,r$.  Equations (\ref{d2}) and (\ref{d1}) imply that $\lambda=\mu_{l_o}$ and also that 
\begin{equation}\label{mpsm}A_{l_om}Z_o=0,\quad m=1,\dots,r,\quad m\neq l_o.\end{equation}
However, the matrices $A_{lm}$ correspond to $\operatorname{Ad}(H)$-equivariant maps $\phi:\fr{m}_{k_l}\rightarrow \fr{m}_{k_m}$.  Since $Z_o\neq 0$, and since any non-zero equivariant map between irreducible submodules is an isomorphism, equation (\ref{mpsm}) implies that $A_{l_om}=0$ for any $m=1,\dots r$ with $m\neq l_o$.  Finally, since $l_o$ is arbitrary we obtain that $A_{lm}=0$ for any $l,m=1,\dots,r$, $l\neq m$, which concludes the proof of Proposition \ref{class1}.
\end{proof}

The following proposition implies that the restriction of any $G$-GO metric on an isotypic summand $S_k$ is a scalar multiple of the identity if the non-zero $\operatorname{Ad}(H)$-equivariant maps between the submodules in $S_k$ satisfy certain orthogonality conditions.

\begin{prop}\label{class}Let $M=G/H$ be a compact homogeneous manifold with the reductive decomposition $\fr{g}=\fr{h}\oplus \fr{m}$ with respect to an $\operatorname{Ad}$-invariant inner product $B$ in $\fr{g}$.  Choose a $B$-orthogonal basis $\mathcal{B}$ of $\fr{m}$ and let $S_k=\fr{m}_{k_1}\oplus \cdots \oplus \fr{m}_{k_r}\subset \fr{m}$ be an isotypical summand of the isotropy representation of $H$ in $\fr{m}$.  Assume that for each $l=1,\dots,r$ there exists a vector $X_l\in \fr{m}_{k_l}$ such that for any pair $m_1,m_2=1,\dots,r$, with $l,m_1,m_2$ distinct, and for any pair of non-zero $\operatorname{Ad}(H)$-equivariant maps $\phi_{lm_1}:\fr{m}_{k_l}\rightarrow \fr{m}_{k_{m_1}}$, $\phi_{lm_2}:\fr{m}_{k_l}\rightarrow \fr{m}_{k_{m_2}}$, the following relations hold.

\begin{equation}\label{itis} [X_l,\phi_{lm_i}(X_l)]_{S_k^{\bot}}\neq 0, \quad i=1,2,\end{equation}
 and 
\begin{equation}\label{itis1} B([X_l,\phi_{lm_1}(X_l)]_{S_k^{\bot}},[X_l,\phi_{lm_2}(X_l)]_{S_k^{\bot}})=0.\end{equation}

Then any $G$-GO metric $A$ in $M$ is scalar on $S_k$, i.e. 
\begin{equation*}\left.A\right|_{S_k}=\lambda \left.\operatorname{Id}\right|_{S_k}. \end{equation*}

\end{prop}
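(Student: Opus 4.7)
The plan is to apply the GO condition from Proposition \ref{G} to the distinguished vectors $X_l\in \fr{m}_{k_l}$ supplied by the hypothesis, and to extract information in two stages: first, eliminate the off-diagonal blocks $A_{lm}$ in the matrix form (\ref{matrixform}) (diagonalization on $S_k$), then force all diagonal eigenvalues $\mu_l$ to coincide (scalarization on $S_k$). For the first stage, fix $l$ and apply Proposition \ref{G} to $X_l$: there exists $a\in \fr{h}$ with $[a+X_l,AX_l]=0$. Since $AX_l\in S_k$ by (\ref{mfe}) and $S_k$ is $\operatorname{Ad}(H)$-invariant, the bracket $[a,AX_l]$ lies in $S_k$, so projecting the identity onto $S_k^{\bot}$ yields $[X_l,AX_l]_{S_k^{\bot}}=0$. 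Expanding by (\ref{mfe}) this reads
\begin{equation*}
\sum_{m\neq l}[X_l,A_{lm}X_l]_{S_k^{\bot}}=0.
\end{equation*}

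Each nonzero block $A_{lm}$ is a nonzero $\operatorname{Ad}(H)$-equivariant map $\fr{m}_{k_l}\to \fr{m}_{k_m}$, so it qualifies as a $\phi_{lm}$ in the hypothesis. If at least two of the blocks $A_{lm}$ were nonzero, then (\ref{itis}) would make the corresponding summands nonzero while (\ref{itis1}) would make them pairwise $B$-orthogonal, and a family of nonzero pairwise orthogonal vectors is linearly independent, so cannot sum to zero. If only one block $A_{lm_0}$ were nonzero, then picking any third index $m'\notin\{l,m_0\}$ together with any nonzero equivariant $\phi_{lm'}$ (which exists because the submodules of $S_k$ are pairwise equivalent), (\ref{itis}) applied with the pair $(m_0,m')$ gives $[X_l,A_{lm_0}X_l]_{S_k^{\bot}}\neq 0$, contradicting the displayed identity. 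Hence every $A_{lm}$ with $m\neq l$ vanishes, so $\left.A\right|_{S_k}$ is block-diagonal with $\left.A\right|_{\fr{m}_{k_l}}=\mu_l\left.\operatorname{Id}\right|_{\fr{m}_{k_l}}$.

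For the second stage I would invoke Proposition \ref{triple}(1). With the off-diagonal blocks annihilated, each submodule $\fr{m}_{k_l}$ is an $\operatorname{Ad}(H)$-invariant subspace of the eigenspace $\fr{m}_{\mu_l}$. Fix any pair $l\neq m$, pick a third index $m'\notin\{l,m\}$ and any nonzero equivariant isomorphism $\phi_{lm}:\fr{m}_{k_l}\to\fr{m}_{k_m}$. Hypothesis (\ref{itis}) yields $[X_l,\phi_{lm}(X_l)]_{S_k^{\bot}}\neq 0$, and since $S_k^{\bot}\subset(\fr{m}_{k_l}\oplus\fr{m}_{k_m})^{\bot}$ is $B$-orthogonal to $\fr{m}_{k_l}\oplus\fr{m}_{k_m}$, the nonvanishing of the $S_k^{\bot}$-component already forces $[\fr{m}_{k_l},\fr{m}_{k_m}]_{(\fr{m}_{k_l}\oplus\fr{m}_{k_m})^{\bot}}\neq 0$. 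Proposition \ref{triple}(1) then yields $\mu_l=\mu_m$, and running over all pairs gives $\left.A\right|_{S_k}=\lambda\left.\operatorname{Id}\right|_{S_k}$.

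The main obstacle is the first stage: one must verify that the summands in $\sum_{m\neq l}[X_l,A_{lm}X_l]_{S_k^{\bot}}=0$ are simultaneously pairwise orthogonal, which is precisely the role of hypothesis (\ref{itis1}), and crucially the single vector $X_l$ must serve for every pair $(m_1,m_2)$. Once this is secured, the orthogonality-plus-nonvanishing trick forces each summand to vanish, and the eigenvalue unification in the second stage reduces to a direct application of the projection criterion Proposition \ref{triple}(1). The argument implicitly relies on the existence of a third index $m'\notin\{l,m\}$, i.e.\ on $r\geq 3$; the cases $r\leq 2$ either make the conclusion automatic ($r=1$ by Schur's lemma) or render the hypothesis vacuous.
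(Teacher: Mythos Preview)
Your proposal is correct and follows essentially the same route as the paper: apply Proposition~\ref{G} to the distinguished vector $X_l$, project the relation $[a+X_l,AX_l]=0$ onto $S_k^{\bot}$ (the paper does this implicitly via equation~(\ref{ee1})), then use the nonvanishing (\ref{itis}) and pairwise orthogonality (\ref{itis1}) of the summands $[X_l,A_{lm}X_l]_{S_k^{\bot}}$ to force each off-diagonal block $A_{lm}$ to vanish, and finally invoke Proposition~\ref{triple}(1) to equate the diagonal eigenvalues. Your explicit case analysis (two-or-more versus exactly one nonzero block) and your remark about the implicit assumption $r\geq 3$ are more careful than the paper's presentation, which handles both cases in a single sentence and does not comment on small $r$.
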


\begin{proof}
At first, we prove that $A$ is diagonal on $S_k$, i.e. 

\begin{equation}\label{ts1}\left.A\right|_{S_k}=\mu_1\left.\operatorname{Id}\right|_{\fr{m}_{k_1}}+\cdots+\mu_r\left.\operatorname{Id}\right|_{\fr{m}_{k_r}} , \quad \mu_1,\dots,\mu_r>0.\end{equation}
 In view of equation (\ref{matrixform}), it is equivalent to prove that the $\operatorname{Ad}(H)$-equivariant maps $A_{lm}$ are zero for any $l,m=1,\dots,r$ with $l\neq m$.\\
Choose any $l_o=1,\dots,r$, and let $X_{l_o}\in \fr{m}_{k_{l_o}}$ be as in the hypothesis of Proposition \ref{class}.  Assume that there exists an $m_o=1,\dots,r$, with $m_o\neq l_o$, such that $A_{l_om_o}$ is non-zero.  We will prove that the above assumption leads to a contradiction.\\
From relation (\ref{mfe}), we obtain that 

\begin{equation}\label{d}AX_{l_o}=\left.A\right|_{S_k}X_{l_o}=\mu_{l_o}X_{l_o}+\sum_{l_o\neq m=1}^rA_{l_om}X_{l_o}.\end{equation}

Moreover, since $A$ is a $G$-GO metric, then Proposition \ref{G} implies that there exists a vector $a\in \fr{h}$ such that 

\begin{equation}\label{s}[a+X_{l_o},AX_{l_o}]=0.\end{equation}

By substituting expression (\ref{d}) into equation (\ref{s}) we obtain that 

\begin{equation}\label{ff}\mu_{l_o}[a,X_{l_o}]+\sum_{l_o\neq m=1}^r[a,A_{l_om}X_{l_o}]+\sum_{l_o\neq m=1}^r[X_{l_o},A_{l_om}X_{l_o}]=0.\end{equation}

We will show that the assumption that $A_{l_om_o}$ is non-zero, contradicts equation (\ref{ff}).\\
 Since $X_{l_o}$ is a vector in $\fr{m}_{k_{l_o}}$, and since each of the submodules $\fr{m}_{k_{l_o}},\fr{m}_{k_m}$, $l_o\neq m=1,\dots,r$, are $\operatorname{ad}(\fr{h})$-invariant, we have that 

\begin{equation*}[a,X_{l_o}]\in \fr{m}_{k_{l_o}}\quad \makebox{and} \quad [a,A_{l_om}X_{l_o}]\in [a,\fr{m}_{k_m}]\subset \fr{m}_{k_m}.\end{equation*}

  Hence 
\begin{equation}\label{ee1}\mu_{l_o}[a,X_{l_o}]+\sum_{l_o\neq m=1}^r[a,A_{l_om}X_{l_o}]\in S_k.\end{equation}
		
On the other hand, the assumption that $A_{l_om_o}$ is a non-zero $\operatorname{Ad}(H)$-equivariant map, along with equation (\ref{itis}) in the hypothesis, imply that 

\begin{equation}\label{ee2}[X_{l_o},A_{l_om_o}X_{l_o}]_{S_k^{\bot}}\neq 0.\end{equation}

 Moreover, by virtue of equations (\ref{itis1}), the terms $[X_{l_o},A_{l_om}X_{l_o}]_{S_k^{\bot}}$, $l_o\neq m=1,\dots,r$ are pairwise orthogonal in any of the following separate cases:\\
\textbf{i.} Each $A_{l_om}$, $l_o\neq m=1,\dots,r$, is a non-zero $\operatorname{Ad}(H)$-equivariant map.\\
\textbf{ii.} There exist an $m=1,\dots,r$, with $l_o\neq m$, such that $A_{l_om}=0$.\\

Therefore, equation (\ref{ee2}) implies that 

\begin{equation*}\label{ee3}\sum_{l_o\neq m=1}^r[X_{l_o},A_{l_om}X_{l_o}]_{S_k^{\bot}}\neq 0,\end{equation*} 

\noindent which, along with equation (\ref{ee1}), contradict equation (\ref{ff}).  We conclude that $A_{l_om_o}=0$.  Since $l_o,m_o$ are arbitrary we obtain that $A_{lm}=0$ for any $l,m=1,\dots,r$ with $l\neq m$, therefore, $A$ is diagonal on $S_k$.\\
To conclude the proof, it remains to prove that all eigenvalues $\mu_l$, $l=1,\dots,r$ in relation (\ref{ts1}) are equal, which implies that $\left.A\right|_{S_k}=\lambda \left.\operatorname{Id}\right|_{S_k}$.  To this end, we will use Proposition \ref{triple}.  For any $l,m=1,\dots,r$ with $l\neq m$, relation (\ref{itis}) implies that	$[\fr{m}_{k_l},\fr{m}_{k_m}]_{S_k^{\bot}}\neq \left\{ {0} \right\}$, therefore, it is

\begin{equation*}[\fr{m}_{k_l},\fr{m}_{k_m}]_{(\fr{m}_{k_l}\oplus \fr{m}_{k_m})^{\bot}}\neq \left\{ {0} \right\}.\end{equation*}	

By virtue of part \textbf{1.} of Proposition \ref{triple} we obtain that $\mu_l=\mu_m$.  Since the pair $(l,m)$ is arbitrary, we conclude that all eigenvalues $\mu_l$, $l=1,\dots,r$ are equal, therefore, $A$ is scalar on the summand $S_k$ and Proposition \ref{class} follows.

\end{proof}

\section{Proof of Theorem \ref{mainuu}}\label{prooof}

In this section we apply the results of section \ref{s3} in order to prove Theorem \ref{mainuu} and therefore obtain the $U(n)$-GO metrics in the complex Stiefel manifolds $V_k\mathbb C^n$.  As mentioned in section \ref{ps}, in order to determine all GO metrics in a homogeneous manifold $M$, one has to consider all Lie groups $G$ acting smoothly and transitively on $M$.  For most cases of Stiefel manifolds $M$, the classification of smooth, transitive and effective actions on $M$ is obtained in \cite{Hs-Su}.\\    

The Stiefel manifold $V_k\mathbb C^n$ is the set of orthonormal $k$-frames in $\mathbb C^n$.  The group $U(n)$ acts smoothly and transitively on $V_k\mathbb C^n$ so that $V_k\mathbb C^n$ is diffeomorphic to $U(n)/U(n-k)$, where $U(n-k)$ is embedded diagonally in $U(n)$.  We consider the $\operatorname{Ad}(U(n))$-invariant inner product $B:\fr{u}(n)\times \fr{u}(n)\rightarrow \mathbb R$ given by

\begin{equation*}\label{inner}B(X,Y)=-\operatorname{Trace}(XY), \quad X,Y\in \fr{u}(n)\end{equation*}

and we obtain a $B$-orthogonal reductive decomposition 
\begin{equation}\label{obt}\fr{u}(n)=\fr{u}(n-k)\oplus \fr{m}.\end{equation}

We consider a basis of $\fr{u}(n)$ as follows:\\
Let $M_{n\times n}\mathbb C$ be the set of complex valued $n\times n$ matrices and let $E_{ij}\in M_{n\times n}\mathbb C$, $i,j=1,\dots,n$, be the matrix with value equal to 1 in the $(i,j)$-entry and zero elsewhere.  For $i,j=1,\dots,n$, we set

\begin{equation}\label{mel}e_{ij}=E_{ij}-E_{ji},\quad \bar{e}_{ij}=\sqrt{-1}(E_{ij}+E_{ji}).\end{equation} 
 
Then, the set 

\begin{equation*}\label{set}\mathcal{B}=\left\{ {e_{ij},\bar{e}_{lm}:i<j=1,\dots,n \quad \makebox{and} \quad l\leq m=1,\dots,n} \right\},\end{equation*}  

\noindent constitutes a basis of $\fr{u}(n)$ which is orthogonal with respect to $B$.  The Lie bracket relations between the basis vectors can be obtained by the following lemma.

\begin{lemma}\label{rel}

Regarding the vectors (\ref{mel}), the following relations hold.
\begin{equation*}
\begin{array}{ccc}
e_{ij}=-e_{ji},\quad \bar{e}_{ij}=\bar{e}_{ji}, & [e_{ij},e_{lm}]=\delta_{jl}e_{im}-\delta_{im}e_{lj}-\delta_{il}e_{jm}-\delta_{jm}e_{il},\\

[\bar{e}_{ij},e_{lm}]=\delta_{jl}\bar{e}_{im}-\delta_{im}\bar{e}_{lj}+\delta_{il}\bar{e}_{jm}-\delta_{jm}\bar{e}_{il}, & [\bar{e}_{ij},\bar{e}_{lm}]=-\delta_{jl}e_{im}+\delta_{im}e_{lj}-\delta_{il}e_{jm}-\delta_{jm}e_{il}.\end{array}
\end{equation*}

\end{lemma}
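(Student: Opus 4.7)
The plan is to reduce the entire lemma to the single matrix-unit identity
\begin{equation*}
E_{ij}E_{lm}=\delta_{jl}E_{im},\qquad\text{hence}\qquad [E_{ij},E_{lm}]=\delta_{jl}E_{im}-\delta_{im}E_{lj}.
\end{equation*}
The two symmetry assertions $e_{ij}=-e_{ji}$ and $\bar e_{ij}=\bar e_{ji}$ are immediate from the defining formulas (\ref{mel}), since $E_{ij}-E_{ji}=-(E_{ji}-E_{ij})$ and $E_{ij}+E_{ji}=E_{ji}+E_{ij}$.

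For each of the three bracket formulas I would expand by bilinearity into four brackets of matrix units, apply the identity above, and then regroup the eight resulting $E$-terms back into the basis vectors. The key pairing rule is that terms of the form $E_{ab}-E_{ba}$ reassemble to $e_{ab}$, while terms of the form $\sqrt{-1}(E_{ab}+E_{ba})$ reassemble to $\bar e_{ab}$. For $[e_{ij},e_{lm}]=[E_{ij}-E_{ji},\,E_{lm}-E_{ml}]$ the four resulting commutators produce exactly the four Kronecker-delta coefficients $\delta_{jl},\delta_{im},\delta_{il},\delta_{jm}$, and each coefficient gathers a pair of the antisymmetric form, yielding the first formula. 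For $[\bar e_{ij},e_{lm}]$ an overall factor $\sqrt{-1}$ is preserved, and the alternating signs coming from $E_{ij}+E_{ji}$ against $E_{lm}-E_{ml}$ force every coefficient to gather a symmetric pair $E_{ab}+E_{ba}$, so the right-hand side is a combination of $\bar e$-vectors; the sign pattern $(+,-,+,-)$ in the statement is dictated by the different symmetrisation of the first factor. For $[\bar e_{ij},\bar e_{lm}]$ the two factors of $\sqrt{-1}$ multiply to $-1$, the pairs again come out antisymmetric, and the right-hand side returns to a combination of $e$-vectors, giving the sign pattern $-\delta_{jl},+\delta_{im},-\delta_{il},-\delta_{jm}$ in the statement.

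There is no genuine obstacle: the lemma is a direct verification. The only point requiring mild care is the bookkeeping of signs together with the degenerate cases in which several Kronecker deltas fire simultaneously (for example $i=l$ and $j=m$); by linearity and the already-established relations $e_{ab}=-e_{ba}$, $\bar e_{ab}=\bar e_{ba}$, those coincidences are automatically absorbed into the four-term formulas as stated.
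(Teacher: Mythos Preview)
Your proposal is correct and is precisely the paper's approach: the paper's proof consists of the single observation $[E_{ij},E_{lm}]=\delta_{jl}E_{im}-\delta_{im}E_{lj}$ followed by the remark that the lemma follows by direct computation. Your write-up simply spells out that direct computation in more detail.
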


\begin{proof}We observe that $[E_{ij},E_{lm}]=\delta_{jl}E_{im}-\delta_{im}E_{lj}$.  Lemma \ref{rel} then follows by direct computation.
\end{proof}

In the proof of Theorem \ref{mainuu} we will also use the following.

\begin{prop}(\cite{Go-On-Vi},\cite{Ar})\label{ar} Let $M=G/H$ be a homogeneous manifold with the reductive decomposition $\fr{g}=\fr{h}\oplus \fr{m}$.  Let $\chi$ be the isotropy representation of $H$ in $\fr{m}$ and let $h\in H$, $a\in \fr{h}$ and $X\in \fr{m}$.  Then

\begin{equation}\label{split}\operatorname{Ad}^G(h)(a+X)=\operatorname{Ad}^H(h)a+\chi(h)X;\end{equation}

\noindent that is, the restriction $\operatorname{Ad}^G|_H$ splits into the sum $\operatorname{Ad}^H\oplus \chi$.
\end{prop}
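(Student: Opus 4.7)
The plan is to verify the claimed splitting by checking separately that $\operatorname{Ad}^G(h)|_{\fr{h}}=\operatorname{Ad}^H(h)$ and $\operatorname{Ad}^G(h)|_{\fr{m}}=\chi(h)$; once both hold, the asserted identity $\operatorname{Ad}^G(h)(a+X)=\operatorname{Ad}^H(h)a+\chi(h)X$ follows from $\R$-linearity of $\operatorname{Ad}^G(h)$. The $\operatorname{Ad}(H)$-invariance of each summand $\fr{h}$ and $\fr{m}$ of the reductive decomposition guarantees that the two restrictions make sense.

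The first restriction is essentially a functoriality statement: since $H$ is a Lie subgroup of $G$, the inclusion $\iota:H\hookrightarrow G$ induces $d\iota_e:\fr{h}\hookrightarrow \fr{g}$, and for any $h\in H$ the conjugation $c^H_h=\iota^{-1}\circ c^G_h\circ \iota$ on $H$ agrees with the restriction to $H$ of the conjugation $c^G_h$ on $G$. Differentiating at $e$ immediately gives $\operatorname{Ad}^H(h)=\operatorname{Ad}^G(h)|_{\fr{h}}$, which handles the $\fr{h}$-component.

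The heart of the proof is identifying $\operatorname{Ad}^G(h)|_{\fr{m}}$ with the isotropy representation. Writing $\pi:G\rightarrow G/H$ for the natural projection and $p=eH$, one uses the isomorphism $(d\pi)_e|_{\fr{m}}:\fr{m}\rightarrow T_p(G/H)$ supplied by the reductive splitting. The key intertwining relation is
\begin{equation*}
\pi\circ c^G_h=\tau_h\circ \pi,\qquad h\in H,
\end{equation*}
which holds because $\pi(hgh^{-1})=hgh^{-1}H=hgH=\tau_h(gH)$ since $h^{-1}\in H$. Differentiating at $e$ and restricting to $\fr{m}$ gives $(d\pi)_e\circ \operatorname{Ad}^G(h)|_{\fr{m}}=(d\tau_h)_p\circ (d\pi)_e|_{\fr{m}}=\chi(h)\circ (d\pi)_e|_{\fr{m}}$. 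Since $\operatorname{Ad}^G(h)$ preserves $\fr{m}$ and $(d\pi)_e|_{\fr{m}}$ is an isomorphism, we conclude $\operatorname{Ad}^G(h)|_{\fr{m}}=\chi(h)$ under the identification $\fr{m}\cong T_p(G/H)$.

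I do not expect a serious obstacle here; the whole statement is a bookkeeping exercise in the definitions of the reductive decomposition, the adjoint representation, and the isotropy representation. The only point that demands care is the identification of the isotropy representation with $\operatorname{Ad}^G(h)|_{\fr{m}}$ via the projection $\pi$, and this reduces to the commutativity $\pi\circ c^G_h=\tau_h\circ \pi$ above. Adding the two block-diagonal identifications yields the split form \eqref{split}, completing the argument.
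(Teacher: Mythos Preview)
Your proof is correct and is precisely the standard argument. The paper does not actually supply a proof of this proposition; it merely states the result with citations to \cite{Go-On-Vi} and \cite{Ar}, where the argument you have given (the intertwining relation $\pi\circ c^G_h=\tau_h\circ\pi$ differentiated at the identity, together with the obvious identification $\operatorname{Ad}^G(h)|_{\fr{h}}=\operatorname{Ad}^H(h)$) is exactly what appears.
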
  

We proceed to prove Theorem \ref{mainuu}.  The proof comprises of three basic steps which appear as the subsections (\ref{q1})-(\ref{q3}).\\

\emph{Proof of Theorem \ref{mainuu}.}

\subsection{Isotropy representation of $V_k\mathbb C^n=U(n)/U(n-k)$}\label{q1}

We determine the irreducible submodules of the isotropy representation $\chi$ of $U(n-k)$ in $\fr{m}$, in terms of the basis $\mathcal{B}$.  We denote by $\mu_n$ the standard representation of $U(n)$ in $\mathbb C^n$.  Then

\begin{equation}\label{st}\operatorname{Ad}^{U(n)}\otimes \mathbb C=\mu_n \otimes_{\mathbb C} \bar{\mu}_n\end{equation}

(\cite{Ar}).  By taking into account the assumption that the subgroup $U(n-k)$ is embedded diagonally in $U(n)$, we can identify $U(n-k)$ with the subgroup

\begin{equation*}
\left\lfloor\begin{array}{ll}
\operatorname{Id}_k & \hspace{2cm} 0 \\
0  & \hspace{25pt} U(n-k)
\end{array}\right\rfloor
\end{equation*}

of $U(n)$.  Thus, by using relation (\ref{st}) we obtain that 

\begin{eqnarray*}\left.\operatorname{Ad}^{U(n)}\otimes \mathbb C\right|_{U(n-k)}&=&\left.\mu_n \otimes_{\mathbb C} \bar{\mu}_n \right|_{U(n-k)}=\left.\mu_n\right|_{U(n-k)}\otimes_{\mathbb C} \left.\bar{\mu}_n\right|_{U(n-k)} \nonumber \\
&=&
(\underbrace{1 \oplus \cdots \oplus 1}_{k-\makebox{times}}\oplus \mu_{n-k})\otimes_{\mathbb C}(\underbrace{1 \oplus \cdots \oplus 1}_{k-\makebox{times}}\oplus \bar{\mu}_{n-k})\ \\\\
&=&
(\mu_{n-k}\otimes_{\mathbb C}\bar{\mu}_{n-k})\oplus (\underbrace{(\mu_{n-k} \oplus \bar{\mu}_{n-k})\oplus \cdots \oplus (\mu_{n-k}\oplus \bar{\mu}_{n-k})}_{k-\makebox{times}})\oplus (\underbrace{1\oplus \cdots \oplus 1}_{k^2-\makebox{times}}).\label{rep} 
\end{eqnarray*}

The summand $\mu_{n-k}\otimes_{\mathbb C}\bar{\mu}_{n-k}$ corresponds to $\operatorname{Ad}^{U(n-k)}\otimes \mathbb C$, therefore, by virtue of Proposition (\ref{ar}), 
$\chi\otimes \mathbb C$ is given by 

\begin{equation*}\label{chi}\chi\otimes \mathbb C=(\underbrace{(\mu_{n-k} \oplus \bar{\mu}_{n-k})\oplus \cdots \oplus (\mu_{n-k}\oplus \bar{\mu}_{n-k})}_{k-\makebox{times}})\oplus (\underbrace{1\oplus \cdots \oplus 1}_{k^2-\makebox{times}}).\end{equation*}

Therefore, the real representation $\chi$ induces a decomposition of $\fr{m}$ into the sum of two isotypical summands, as

\begin{equation}\label{mis}\fr{m}=S_0\oplus S_1,\end{equation}

where $S_0$ is generated by the trivial submodules of $\chi$ and $S_1$ is the sum of $k$ equivalent irreducible submodules, each one corresponding to the term $\mu_{n-k}$ of $\chi$, and having real dimension $2(n-k)$.

In terms of the basis $\mathcal{B}$ we have that 

\begin{equation*}\label{h}\fr{u}(n-k)=\operatorname{span}_{\mathbb R}\left\{ {e_{ij},\bar{e}_{lm}\in \mathcal{B}:k+1\leq i,j,l,m\leq n} \right\},\end{equation*}

and

\begin{equation}\label{S00}S_0=\operatorname{span}_{\mathbb R}\left\{ {e_{ij},\bar{e}_{lm}\in \mathcal{B}:1\leq i,j,l,m\leq k} \right\}.\end{equation}

Moreover, each of the $k$ equivalent irreducible representations $\mu_{n-k}$ corresponds to the $2(n-k)$-dimensional subspace

\begin{equation}\label{mi}\fr{m}_i=\operatorname{span}_{\mathbb R}\left\{ {e_{ij},\bar{e}_{im}\in \mathcal{B}:k+1\leq j,m \leq n} \right\},\quad i=1,\dots,k,\end{equation}
 
so that 
\begin{equation}\label{soth}S_1=\fr{m}_1\oplus \cdots \oplus \fr{m}_k.\end{equation}

  The following is a depiction of the upper triangular component of $\fr{m}$ in $\fr{u}(n)$.  For any submodule $\fr{m}_i$, the corresponding upper triangular component will be denoted by $[\fr{m}_i^u]$.

\begin{eqnarray*}
\label{fig}\left\lfloor \begin{array}{cccccc}

\left\lfloor \begin{array}{ccc}
1& \cdots &1\\
 &  \ddots & \vdots \\
 &  & 1
\end{array}
\right\rfloor_{k\times k}
&
\begin{array}{c}
\left\lfloor\hspace{23pt}\fr{m}_1^u\hspace{16pt}\right\rfloor_{1\times (n-k)}\\
\vdots_{\hspace{20pt}}\\
\left\lfloor\hspace{25pt}\fr{m}_k^u\hspace{15pt}\right\rfloor_{1\times (n-k)}
\end{array}
\\

\begin{array}{c}
\\
\\
\\
\end{array}

&\hspace{21pt}\left\lfloor\begin{array}{c}
\\
\fr{u}(n-k) \\
\\
\end{array}
\right\rfloor_{(n-k)\times (n-k)}

\end{array}
\right\rfloor
\end{eqnarray*}

\subsection{Simplification of the $U(n)$-GO metrics in $V_k\mathbb C^n$}\label{q2}

 Taking into account the decomposition (\ref{mis}) of the tangent space $\fr{m}$, then by virtue of Proposition \ref{summar} we deduce that any $U(n)$-invariant metric $A$ in $V_k\mathbb C^n$ splits as

\begin{equation}\label{mis1}A=\left.A\right|_{S_0}+\left.A\right|_{S_1}.\end{equation}

We assume that $A$ is a $U(n)$-GO metric.  We will use the results of section \ref{s3} in order to simplify the terms $\left.A\right|_{S_0}$ and $\left.A\right|_{S_0}$.\\
At first, we observe from relation (\ref{S00}) that the summand $S_0$ is isomorphic to the Lie algebra $\fr{u}(k)$.  Hence, the decomposition of $S_0$ into the direct sum of its center and simple ideals is given by $S_0=\fr{u}(1)\oplus \fr{su}(k)=\fr{z}(S_0)\oplus \fr{su}(k)$, where $\fr{u}(1)=\fr{z}(S_0)=\operatorname{span}_{\mathbb R}\left\{ {\sum_{i=1}^k{\bar{e}_{ii}}} \right\}$, in terms of the basis $\mathcal{B}$, and $\fr{su}(k)=\left\{ {X\in S_0:\operatorname{Trace}(X)=0} \right\}$.  By virtue of Proposition \ref{en}, the metric $\left.A\right|_{S_0}$ is then given by

\begin{equation}\label{s1}\left.A\right|_{S_0}=\mu\left.\operatorname{Id}\right|_{\fr{z}(S_0)}+\lambda\left.\operatorname{Id}\right|_{\fr{su}(k)}.\end{equation}

In view of the decomposition (\ref{soth}) for the isotypical summand $S_1$, and taking into account relation (\ref{mi}), we obtain that any vector $v\in S_1$ is written as

\begin{equation*}v=\sum_{i=1}^k{\sum_{j=k+1}^n{(a_{ij}e_{ij}+b_{ij}\bar{e}_{ij})}},\quad a_{ij},b_{ij}\in \mathbb R,\end{equation*}

where for any $i=1,\dots,k$, the term $v_i=\sum_{j=k+1}^n{(a_{ij}e_{ij}+b_{ij}\bar{e}_{ij})}$ is the projection of $v$ on $\fr{m}_i$.  For each $i=1,\dots,k$ we choose the vector $\bar{e}_{ii}\in S_0\subset S_1^{\bot}$.  Using Lemma \ref{rel} it is straightforward to verify that 

\begin{equation*}[\bar{e}_{ii},v]=2\sum_{j=k+1}^n{(a_{ij}\bar{e}_{ij}-b_{ij}e_{ij})},\end{equation*}

which is a non-zero vector in $\fr{m}_i$ if $v_i$ is non-zero.  Proposition \ref{class1} then implies that $\left.A\right|_{S_1}$ is diagonal, i.e.

\begin{equation}\label{inis}\left.A\right|_{S_1}=\mu_1\left.\operatorname{Id}\right|_{\fr{m}_1}+ \cdots +\mu_k\left.\operatorname{Id}\right|_{\fr{m}_k}.\end{equation}

We will show that the all the eigenvalues $\mu_i$, $i=1,\dots,k$, are equal.  Indeed, for any $i,j=1,\dots,k$ with $i\neq j$, Lemma \ref{rel} implies that 
\begin{equation}\label{temm}[e_{ik+1},e_{jk+1}]=-e_{ij}\in S_0.\end{equation}

  Since $e_{ik+1}\in \fr{m}_i$ and $e_{jk+1}\in \fr{m}_j$, then equation (\ref{temm}) implies that 

\begin{equation}\label{inis1}[\fr{m}_i,\fr{m}_j]_{(\fr{m}_i\oplus \fr{m}_j)^{\bot}}\neq \{0\}.\end{equation}

By relation (\ref{inis}) we immediately obtain that $\fr{m}_i\subset \fr{m}_{\mu_i}$ and $\fr{m}_j\subset \fr{m}_{\mu_j}$.  By also taking into account relation (\ref{inis1}) as well as the fact that $A$ is GO, then the first part of Proposition \ref{triple} implies that $\mu_i=\mu_j$.  Since $i,j$ are arbitrary, we obtain that all the eigenvalues $\mu_i$ are equal, therefore, equation (\ref{inis}) yields

\begin{equation}\label{s2}\left.A\right|_{S_1}=\widetilde{\lambda}\left.\operatorname{Id}\right|_{S_1}.\end{equation} 

Taking into account equations (\ref{mis1}), (\ref{s1}) and (\ref{s2}) we obtain that 

\begin{equation}\label{nnm}A=\mu\left.\operatorname{Id}\right|_{\fr{z}(S_0)}+\lambda\left.\operatorname{Id}\right|_{\fr{su}(k)}+\widetilde{\lambda}\left.\operatorname{Id}\right|_{S_1}.\end{equation}
  We will show that $\lambda=\widetilde{\lambda}$.  Indeed, choosing the vectors $e_{12}\in \fr{su}(k)\subset S_0$ and $e_{1k+1}\in \fr{m}_1\subset S_1$ and using Lemma \ref{rel} again, we obtain that $[e_{12},e_{1k+1}]=-e_{2k+1}\in \fr{m}_2$.  Therefore we obtain that $[S_0,\fr{m}_1]_{(S_0\oplus \fr{m}_1)^{\bot}}\neq \{0\}$.  Taking into account that $S_0\subset \fr{m}_{\lambda}$ and $\fr{m}_1\subset \fr{m}_{\widetilde{\lambda}}$ then the first part of Proposition \ref{triple} implies that $\lambda=\widetilde{\lambda}$.  \\
	Equation (\ref{nnm}) then yields that $A=\mu\left.\operatorname{Id}\right|_{\fr{z}(S_0)}+\lambda\left.\operatorname{Id}\right|_{\fr{su}(k)\oplus S_1}$.  After normalizing $A$, we obtain that $A$ is homothetic to the one-parameter family 
	
	\begin{equation}\label{opf}A_t=\left.\operatorname{Id}\right|_{\fr{su}(k)\oplus  S_1}+t\left.\operatorname{Id}\right|_{\fr{z}(S_0)},\quad t>0.\end{equation}

For $t=1$, $A_1$ is a normal metric in $V_k\mathbb C^n$ with respect to the decomposition (\ref{obt}).\\
In summary, we have shown that if $A$ is a $U(n)$-GO metric in the Stiefel manifold $V_k\mathbb C^n$, then $A$ is homothetic to the family (\ref{opf}).

\subsection{Verification of the candidate $U(n)$-GO metrics in $V_k\mathbb C^n$}\label{q3}

  To conclude the proof of Theorem \ref{mainuu}, it remains to show that the candidate GO-metrics (\ref{opf}) are indeed $U(n)$-GO metrics in $V_k\mathbb C^n$.\\
Let $X\in \fr{m}=S_0\oplus S_1=\fr{su}(k)\oplus S_1\oplus \fr{z}(S_0)$.  By virtue of Proposition \ref{G}, we need to find a vector $a\in \fr{u}(n-k)$ such that 

\begin{equation}\label{ggo}[a+X,A_tX]=0,\quad t\in \mathbb R.\end{equation}

Let $X_{\fr{su}(k)}$, $X_{S_1}$ and $X_{\fr{z}(S_0)}$ be the projections of $X$ on $\fr{su}(k)$, $S_1$ and $\fr{z}(S_0)$ respectively so that

\begin{equation}\label{prj}X=X_{\fr{su}(k)}+X_{S_1}+X_{\fr{z}(S_0)}.\end{equation}
 	
Then in terms of the basis $\mathcal{B}$ we have that

\begin{equation*}\label{a}X_{\fr{z}(S_0)}=r\sum_{i=1}^k{\bar{e}_{ii}},\quad r\in \mathbb{R},\quad \makebox{and} \quad X_{S_1}=\sum_{i=1}^k{\sum_{j=k+1}^n{(a_{ij}e_{ij}+b_{ij}\bar{e}_{ij})}}, \quad a_{ij},b_{ij}\in \mathbb R.\end{equation*}

Moreover, we set

\begin{equation*}\widetilde{X}_{S_1}=\sum_{i=1}^k{\sum_{j=k+1}^n{(b_{ij}e_{ij}-a_{ij}\bar{e}_{ij})}},\end{equation*} 

and for any $t\in \mathbb R$ we choose the vector

\begin{equation*}a_t=r(1-t)\sum_{i=k+1}^n{\bar{e}_{ii}} \in \fr{u}(n-k).\end{equation*}

By using Lemma \ref{rel}, we obtain the following equations.

\begin{equation}\label{sf0} [X_{\fr{z}(S_0)},X_{S_1}]=-2r\widetilde{X}_{S_1}, \quad \makebox{and}\end{equation}

\begin{equation}\label{sf}[a_t,X_{S_1}]=2r(1-t)\widetilde{X}_{S_1}.\end{equation}

Since $S_0=\fr{z}(S_0)\oplus \fr{su}(k)$ is the Lie algebra of the group $N_G(H)/H$ with $G=U(n),H=U(n-k)$, then we obtain that $[\fr{u}(n-k),S_0]=\left\{ {0} \right\}$.  Finally, we recall that $[\fr{z}(S_0),S_0]=\left\{ {0} \right\}$.  Hence the equation  

\begin{equation}\label{sf1}[a_t,X_{\fr{z}(S_0)}]=[a_t,X_{\fr{su}(k)}]=[X_{\fr{z}(S_0)},X_{\fr{su}(k)}]=0.\end{equation}

We will now verify condition (\ref{ggo}) for the above choice of $a=a_t$.\\
By taking into account relation (\ref{prj}), the definition (\ref{opf}) of the metric $A_t$, as well as equations (\ref{sf0}),(\ref{sf}) and (\ref{sf1}), then for any $t\in \mathbb R$ we obtain that 

\begin{eqnarray*}[a_t+X,A_tX]&=&[a_t+X_{\fr{su}(k)}+X_{S_1}+X_{\fr{z}(S_0)}, X_{\fr{su}(k)}+X_{S_1}+tX_{\fr{z}(S_0)}]\\
&=&
[a_t,X_{\fr{su}(k)}+X_{S_1}+tX_{\fr{z}(S_0)}]+(1-t)[X_{\fr{z}(S_0)},X_{\fr{su}(k)}+X_{S_1}]\\
&=&
[a_t,X_{S_1}]+(1-t)[X_{\fr{z}(S_0)},X_{S_1}]=2r(1-t)\widetilde{X}_{S_1}-2r(1-t)\widetilde{X}_{S_1}=0,\end{eqnarray*}

\noindent which concludes the proof of Theorem \ref{mainuu}.

\hfill\(\Box\)\\

In the particular case where $k=n-1$, i.e. $V_k\mathbb C^n$ is diffeomorphic to $U(n)/U(n-1)$, the GO metrics (\ref{opf}) correspond to deformations of the normal metric $A_1$, along the fibers of the \emph{Hopf fibration}

\begin{equation*}U(1)\rightarrow U(n)/U(n-1)\rightarrow U(n)/U(n-1)\times U(1).\end{equation*}

Therefore, the spaces $(V_{n-1}\mathbb C^n,A_t)$ are precisely the weakly symmetric Berger spheres (\cite{Berger}).

\begin{remark}It was brought to my attention by Professor Y. G. Nikonorov that the proof of Theorem \ref{mainuu} could be simplified by using Proposition \ref{nik}, which is stated as a Corollary in \cite{Ni-2}.  Indeed the endomorphism $\left.A\right|_{S_1}$ in the proof of Theorem \ref{mainuu} can be diagonalized accordingly.  We present an alternative proof of the fact that $\left.A\right|_{S_1}=\left.\widetilde{\lambda} \operatorname{Id}\right|_{S_1}$, that makes use of Proposition \ref{nik}.\\
\end{remark}

\emph{Alternative proof of $\left.A\right|_{S_1}=\left.\widetilde{\lambda} \operatorname{Id}\right|_{S_1}$}.\\

 We consider the homogeneous fibration $\pi:G/H\rightarrow G/N_G(H)$ with fiber $N_G(H)/H$ where $G=U(n)$ and $H=U(n-k)$.  By using the diagonal embedding of $H$ in $G$ we deduce that $N_G(H)=U(k)\times U(n-k)$ hence $G/N_G(H)$ is the complex Grassmannian $U(n)/U(k)\times U(n-k)$.  We can write the tangent space $\fr{m}=T_p(G/H)$ as $\fr{m}=S_0\oplus S_1$, where $S_0=T_p(N_G(H)/H)$ and $S_1=T_{\pi(p)}(G/N_G(H))$ and both $S_0$ and $S_1$ are $A$-invariant.  By Corollary \ref{poritsis} the endomorphism $\left.A\right|_{S_1}$ is $\operatorname{Ad}(N_G(H))$-equivariant, therefore it defines a $N_G(H)$-invariant metric on the Grassmannian $G/N_G(H)$.  However, the Grassmannian is isotropy irreducible, hence $\left.A\right|_{S_1}=\widetilde{\lambda}\left.\operatorname{Id}\right|_{S_1}$.    

\hfill\(\Box\)\\

\end{document}